\documentclass[]{svjour3}

\RequirePackage{amsmath, amsfonts, amssymb}
\RequirePackage{graphicx, color}
\RequirePackage{bm}
\RequirePackage{hyperref}
\RequirePackage{pifont}
\RequirePackage[english]{babel}
\RequirePackage[per-mode=symbol]{siunitx}
\usepackage{tikz}

\smartqed  

\newcommand{\D}{\mathcal{D}}
\newcommand{\Dt}{\tilde{\mathcal{D}}}
\newcommand{\n}{\mathbf{n}}
\newcommand{\K}{\mathbf{K}}
\newcommand{\jump}[1]{\left[\!\!\left[#1\right]\!\!\right]}
\newcommand{\Hrm}{\mathrm{H}}
\newcommand{\Vrm}{\mathrm{V}}
\newcommand{\Lrm}{\mathrm{L}}
\newcommand{\Ucal}{\mathcal{U}}
\newcommand{\Hcal}{\mathcal{H}}
\newcommand{\Gmat}{{\bf{G}} }
\newcommand{\aD}{a_{\mathcal{D}}}
\newcommand{\ai}{a_{F_i}}
\newcommand{\TD}{\mathcal{T}^\D_{\delta_\D}}
\newcommand{\TFi}{\mathcal{T}^{i}_{\delta_{F_i}}}
\newcommand{\TGi}{\mathcal{T}^{i}_{\delta_{\Gamma_i}}}
\newcommand{\TS}{\mathcal{T}^{i}_{\delta_{S_m,i}}}
\newcommand{\ND}{\mathcal{N}_{h_\D}}
\newcommand{\NFi}{\mathcal{N}_{h_i}}
\newcommand{\NGi}{\mathcal{N}_{q_i}}
\newcommand{\NGt}{\mathcal{N}_{q}}
\newcommand{\NS}{\mathcal{N}_{u_i}^m}
\newcommand{\Jd}{\mathcal{J}}
\newcommand{\Cmat}{{\bf{C}}}
\newcommand{\Bmat}{{\bf{B}}}
\newcommand{\Nhf}{\mathcal{N}_{h_\Fcal}}
\newcommand{\Nht}{\mathcal{N}_h}
\newcommand{\NSt}{\mathcal{N}_u}
\newcommand{\NSip}{\mathcal{N}_{u_i}^+}
\newcommand{\NSi}{\mathcal{N}_{u_i}}
\newcommand{\Rmat}{{\bf{R}}}
\newcommand{\Amat}{{\bf{A}}}
\newcommand{\Dmat}{{\bf{D}}}
\newcommand{\Emat}{{\bf{E}}}
\newcommand{\Zmat}{{\bf{Z}}}
\newcommand{\Omat}{{\bf{O}}}
\newcommand{\Nw}{\mathcal{N}_w}
\newcommand{\Fcal}{\mathcal{F}}



\title{An optimization approach for flow simulations in poro-fractured media with complex geometries}

\author{Stefano Berrone \and Alessandro D'Auria \and Stefano Scial{\`o}}

\authorrunning{Berrone, D'Auria, Scial\`o} 

\titlerunning{Optimization approach for flow simulations in complex DFMs}

\institute{S. Berrone \and A. D'Auria \and S. Scial\`{o} \at
  Dipartimento di Scienze Matematiche, Politecnico di Torino, Corso
  Duca degli Abruzzi 24, 10129 Torino, Italy
  \\
  Members of the INdAM research group GNCS
  \\
  \email{stefano.berrone@polito.it - alessandro.dauria@polito.it -
    stefano.scialo@polito.it}}

\date{Received: date / Accepted: date}

\begin{document}\maketitle

%
%
%
%
%
%
%
%


\begin{abstract}
A new discretization approach is presented for the simulation of flow in complex poro-fractured media described by means of the Discrete Fracture and Matrix Model. The method is based on the numerical optimization of a properly defined cost-functional and allows to solve the problem without any constraint on mesh generation, thus overcoming one of the main complexities related to efficient and effective simulations in realistic DFMs.

\keywords{3D flows \and Darcy flows \and matrix-fracture coupled flows
\and optimization methods for elliptic problems \and non conforming FEM meshes \and 2D-3D flow coupling}

\subclass{65N30 \and 65N50 \and 68U20 \and 86-08}
\end{abstract}

\section{Introduction}

The present work deals with the simulation of the flow in the subsoil, modelled by means of the Discrete Fracture and Matrix (DFM) model. According to this model, underground fractures are represented as planar polygons arbitrarily oriented in a three dimensional porous matrix. The flows considered here are governed by the Darcy law in the three dimensional matrix and by an averaged Darcy law on each fracture plane, with suitable matching conditions at fracture-matrix interfaces and at fracture intersections. The quantity of interest is the hydraulic head, given by the sum of the pressure head and elevation. Single phase stationary flow is considered, with the assumption of continuity of the hydraulic head at both fracture-matrix interfaces and at fracture-fracture intersections and no longitudinal flow is allowed along fracture intersections. This is a simplified model with respect to other DFM approaches, described, for example in \cite{MJR2005} or, more recently, in \cite{Boon2018}, but still representative of realistic configurations, characterized, e.g., by highly permeable fractures. The main focus of the present work is on geometrical complexity aspects, proposing a problem formulation and a numerical approach suitable for complex and randomly generated networks.
The described approach can however be extended to different flow models and different coupling conditions.
The geometrical complexity of DFM models mainly arises from the multi-scale nature of the resulting domains and from the presence of multiple intersecting interfaces, where the solution displays an irregular behavior. DFM models are proposed as an alternative to homogenization techniques \cite{Qi2005}, dual and multy-porosity models \cite{DPbook}, or embedded discrete fracture matrix (EDFM) models \cite{Li2008,Moinfar2014,2015WR017729}, and are characterized by the explicit representation of the underground fractures, dimensionally reduced to planar interfaces into the porous matrix. As a consequence of the random orientation, fractures usually form an intricate system of intersections, with the presence of fractures with very different sizes spanning several orders of magnitude that generate intersections with huge geometrical complexities as, for example, 2D and 3D geometrical objects with very different dimension and objects with enormous aspect ratios. The research on effective numerical tools for DFM simulations is particularly active, see e.g. \cite{Angot2009,Ahmed2015,Brenner2016,Antonietti2016,BPSf,Odsaeter2019,Antonietti2019,Chernyshenko2020}. One of the key aspects is the meshing of the domain, with a mesh conforming to the interfaces, suitable for standard approaches for the imposition of interface conditions. The generation of a conforming mesh for realistic fracture networks might, in fact, result in an impossible task, for the extremely high number of geometrical constraints.  
The mesh conformity constraint at the interfaces can be relaxed by using extended finite elements as suggested, e.g., by \cite{FS13,formaggia2014}. Different approaches are based on the Mimetic Finite Difference method (MFD) \cite{Lipnikov2014}, as described, for example, in \cite{Wheeler2015,Antonietti2016}, or on Hybrid High Order (HHO) methods as proposed by \cite{Chave2018}, where a partial non-conformity is allowed between the mesh of the porous medium and of the fractures, or also on Discontinuous Galerkin discretizations, as in \cite{Antonietti2019}. Two or multi-point flux approximation based techniques are described in \cite{Sandve2012,Faille2016} and gradient schemes in \cite{Brenner2016}. Virtual Element (VEM) based discretizations have also been recently investigated to ease the mesh generation process in complex DFMs, as in \cite{BBFPSV} where the VEM is coupled to the Boundary Element method, and in \cite{Fumagalli2019}, in \cite{Coulet2019} for poro-elasticity problems, or in \cite{Benedetto2020} where an arbitrary order mixed VEM formulation is proposed.

This work presents a development of an optimization-based approach, first proposed for Discrete Fracture Networks \cite{BPSa,BPSb,BBoS,BSV,BBV2019} and recently extended to DFM problems in \cite{BPSf}. This approach avoids any mesh conformity requirement for the imposition of interface conditions, which are instead enforced through the minimization of a properly defined cost functional. The computation of the quantities involved in functional definition does not require any constraint on the mesh. Further, the resolution of the optimization problem via a gradient-based scheme allows to de-couple the problems on each fracture and the problem on the porous matrix, thus paving the way for an efficient parallel implementation of the numerical scheme, similarly to what done in \cite{BSV,BDV}. The discretization scheme described in \cite{BPSf} relies on the Boundary Element Method for the discretization of the problem on three dimensional matrix blocks, thus requiring the splitting of the original three dimensional domain into sub-domains not crossing the fractures, and thus implying a partial mesh conformity at the fracture-matrix interfaces. Here, the three dimensional domain is not split into sub-domains and Finite Elements are used for the discretization of the matrix, on tetrahedral elements that can arbitrarily cross the fractures. Finite elements on triangular meshes are used for the fractures, with elements not conforming to the tetrahedral mesh and also arbitrarily placed with respect to fracture-fracture intersections. The proposed discretization approach thus greatly improves the usability of the method to general DFM geometries, allowing a trivial meshing process of extremely complex domains, thanks to the complete independence of the mesh from all the interfaces.

The structure of the manuscript is the following: Section~\ref{ProbDescr} describes both the classical and the optimization based formulation of the flow problem in a DFM; the following Section~\ref{DiscrProb} describes the derivation of the discrete problem and the proof of its well posedness; Section~\ref{UncProb} shows how an equivalent unconstrained optimization problem is derived, and the gradient based scheme used for problem resolution; Section~\ref{NumRes} reports some numerical results and finally some conclusions are proposed in Section~\ref{Conc}.

\section{Problem description}
\label{ProbDescr}

\begin{figure}
\centering
\includegraphics[width=0.7\textwidth]{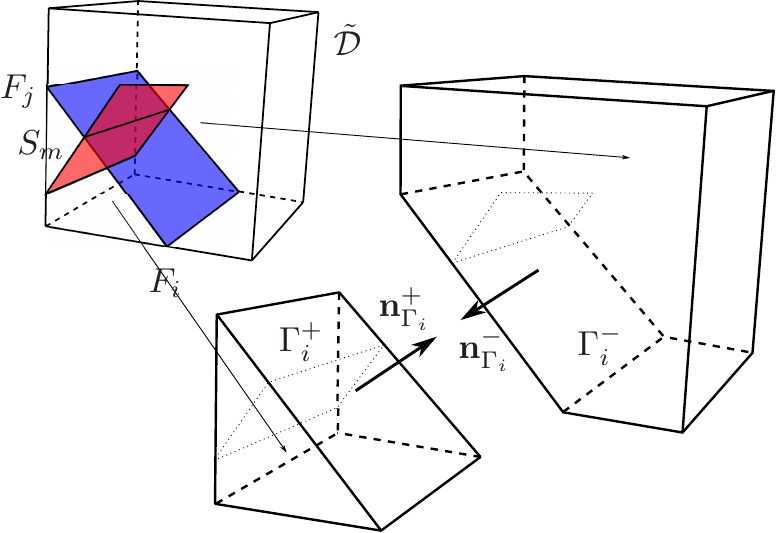}
\caption{Nomenclature exemplification}
\label{block_split}
\end{figure}

This section is devoted to a brief description of the problem of interest, referring to \cite{BPSf} for a more detailed exposition and well posedness results. Let us consider a polyhedral block of porous material, denoted as $\D$, crossed by a fracture network $\Omega$ given by the union of planar polygonal fractures $F_i$, $i=1,\ldots,N_F$ in the three-dimensional space, i.e. $\Omega=\bigcup_{i=1}^{N_F} F_i$. We further denote by $\Fcal$ the set of all fracture indexes. Fractures might intersect, and fracture intersections, also called traces, are indicated as $S_m$, $m=1,\ldots,N_S$. We assume, for simplicity, that each trace is given by the intersection of exactly two fractures, such that an injective map $\sigma: [1,\ldots,N_S]\mapsto [1,\ldots,N_F]\times [1,\ldots,N_F]$ can be defined between a trace index and a couple of fracture indexes, as $\sigma(m)=\{i,j\}$ being $S_m=\bar{F}_i\cap\bar{F}_j$. Further, $\mathcal{S}_i$ is the set of indexes of all the traces on fracture $F_i$ and $\mathcal{S}$ the set of indexes of all the traces in the network. Let us introduce the domain $\Dt=\D\setminus \bar{\Omega}$, thus given by the original block $\D$ without the internal fractures. Calling $\partial \Dt$ the boundary of $\Dt$, let us denote by $\Gamma_i^\pm$ the portion of $\partial \Dt$ that matches fracture $F_i$, for $i=1,\ldots,N_F$, the superscript ``$+$'' or ``$-$'' referring to one of the two sides of the boundary ``around'' the fracture (see Figure~\ref{block_split}); the unit normal vector to $\Gamma_i^\pm$ is $\n_{\Gamma_i}^\pm$, always pointing outward from $\Dt$. A jump operator is introduced for any sufficiently regular vector function $\mathbf{v}$ on $\Dt$, defined as the jump of $\mathbf{v}$ along the normal direction to the faces $\Gamma_i^\pm$:
\begin{displaymath}
\jump{\mathbf{v} \cdot \n}_{\Gamma_i}:= \left(\mathbf{v}_{|\Gamma_i^+} \cdot \n_{\Gamma_i^+}\right)-\left(\mathbf{v}_{|\Gamma_i^-} \cdot \n_{\Gamma_i^+}\right).
\end{displaymath}
Similarly, for $i=1,\ldots,N_F$ we denote by $\tilde{F}_i$ the fracture $F_i$ without traces, i.e. $\tilde{F}_i=F_i\setminus \bigcup_{m\in\mathcal{S}_i} S_m$, and for each trace $S_m$, $m\in\mathcal{S}_i$, for any sufficiently regular vector function $\mathbf{w}_i$ on $F_i$, the jump of the normal component of $\mathbf{w}_i$ across trace $S_m$ on $F_i$ is denoted as:
\begin{displaymath}
\jump{\mathbf{w}_i \cdot \n}_{S_m}:=\left(\mathbf{w}_{i|S_m^+} \cdot \n^i_{S_m}\right)-\left(\mathbf{w}_{i|S_m^-} \cdot \n^i_{S_m}\right),
\end{displaymath}
with $S_m^\pm$ the two sides of the portion of the boundary of $\tilde{F}_i$ lying on $S_m$ and $\n^i_{S_m}$ the normal unit vector to $S_m$ with a fixed orientation on $F_i$. These jump operators are easily extended to functions defined on the whole 3D domain $\D$ and on the whole fractures $F_i$, $i=1,\ldots,N_F$, with the $\pm$ superscripts still denoting the two sides of the interface $\Gamma_i\equiv F_i$, $\forall i=1,\ldots,N_F$, or $S_m$, $\forall m=1,\ldots,N_S$ .

The portion of $\partial\Dt$ not matching any fracture is split in a Dirichlet part $\Gamma_D$ and a Neumann part $\Gamma_N$, $\Gamma_D\cap\Gamma_N=\emptyset$, where, for simplicity of exposition, we assume homogeneous Dirichlet and Neumann boundary conditions are enforced. Similarly, the boundary of each fracture $\partial F_i$, $i=1,\ldots,N_F$, is split in a Dirichlet and Neumann part, $\gamma_{iD}$ and $\gamma_{iN}$, respectively. If fracture $F_i$ lies in the interior of $\D$, then we set $\gamma_{iD}=\emptyset$, and homogeneous Neumann boundary conditions are prescribed on $\gamma_{iN} \equiv \partial F_i$. If $N_F=1$, we assume that $|\gamma_{1D}|>0$, whereas, if there is more than one fracture in the network, we allow $\gamma_{iD}=\emptyset$ for $i=1,\ldots,N_F$.
The problem of the equilibrium distribution of the hydraulic head in $\D$ can be then stated in strong formulation as:
\begin{eqnarray}
-\nabla \cdot \left(\K_\D \nabla H_\D\right)&=&f \quad \text{in} \ \Dt \\
-\nabla_i \cdot \left(\K_i \nabla_i H_i\right) &=& -\jump{\K_\D \nabla H_\D \cdot \n}_{\Gamma_i} \quad \text{in} \ \tilde{F}_i, \ i=1,\ldots,N_F\\
(H_\D)_{|\Gamma_i^\pm}&=&H_i, \quad \ i=1,\ldots,N_F \label{MFcont} \\
H_i&=&H_j, \quad \ \text{on} \ S_m,\ m=1,\ldots,N_S,\ i,j=\sigma(m)\label{FFcont}\\
\jump{\K_i \nabla_i H_i \cdot \n}_{S_m}&=&-\jump{ \K_j \nabla_j H_j \cdot \n}_{S_m}, \ m=1,\ldots,N_S,\ i,j=\sigma(m) \label{FFfluxbal}\\
H_\D &=&0 \quad \text{on} \ \Gamma_D\\
\K_\D \nabla H_\D \cdot \n_{\Gamma_N}&=&0 \quad \text{on} \ \Gamma_{N} \\
H_i &=&0 \quad \text{on} \ \gamma_{iD}, \ i=1,\ldots,N_F\\
\K_i \nabla_i H_i \cdot \n_{\gamma_{iN}}&=&0 \quad \text{on} \ \gamma_{iN}, \ i=1,\ldots,N_F \label{Strong_neu}
\end{eqnarray} 
where $H_\D$ is the hydraulic head in $\Dt$, $H_i$ the hydraulic head on $F_i$, $i=1,\ldots,N_F$ and $f$ is a volumetric source term. The operator $\nabla$ represents the three-dimensional gradient in $\Dt$, $\nabla_i$ is the two-dimensional gradient on the plane containing fracture $F_i$, whereas $\K_\D(\mathbf{x})\in \mathbb{R}^{3\times3}$, for $\mathbf{x}\subset\Dt$ is a symmetric positive definite matrix representing the transmissivity of the porous matrix and $\K_i(\mathbf{x}) \in \mathbb{R}^{2\times2}$, $\mathbf{x}\subset F_i$ is a symmetric positive definite matrix representing the tangential transmissivity of the fracture $F_i$ on its tangential plane. Finally, $\n_{\Gamma_N}$ is the outward unit normal vector to $\Gamma_N$, and for a given index $i=1,\ldots,N_F$, $\n_{\gamma_{iN}}$ the outward unit normal vector $\gamma_{iN}$ on the plane of fracture $F_i$.

Here, for simplicity, we have considered only the source term on the fractures deriving from the exchange with the porous matrix and homogeneous boundary conditions, but the extension to a more general case is immediate. Conditions~\eqref{MFcont} and \eqref{FFcont} express the continuity of the solution at fracture-matrix interfaces and at fracture intersections, respectively, whereas Equation~\eqref{FFfluxbal} enforces the balance of fluxes at the traces. 

Let us now introduce the following functional spaces: first, on each fracture $F_i$, $i=1,\ldots,N_F$, we define the function space $\Vrm_i$ as $\Vrm_i=\Hrm^1_D(F_i)=\left\lbrace v\in \Hrm^1(F_i): v_{|\gamma_{iD}=0}\right\rbrace$; then on the whole three dimensional domain $\D$,  the space $\Hrm^1_{\Omega}(\D)$ is defined as the space of functions in $\Hrm^1_0(\D)$ whose trace on each interface $\Gamma_i^\pm$ $i=1,\ldots,N_F$ is a function in $\Vrm_i$, i.e.:
$$\Hrm^1_{\Omega}(\D)=\left\lbrace v\in \Hrm^1_0(\D): v_{|\Gamma_{D}=0}, \ v_{|\Gamma_i^\pm}\in \Vrm_i, \ i=1,\ldots, N_F \ \right\rbrace.$$
Also, on each trace $S_m$, $m=1,\ldots,N_S$ we set the spaces $\Ucal^m=\Hrm^{-\frac12}(S_m)$ and $\Hcal^m=\Hrm^{\frac12}(S_m)$. We introduce the following variables: $U_i^m\in \Ucal_m$ defined on trace $S_m$ of fracture $F_i$ as 
\begin{equation}
U_i^m=\jump{\mathbf{K}_i \nabla_i H_i \cdot \n}_{S_m}+ \alpha H_{i|S_m},\quad \forall i=1,\ldots,N_F, \ \forall m\in \mathcal{S}_i,
\label{Uvar}
\end{equation}
thus representing a sort of internal Robin boundary condition on the traces; and, for all $i=1,\ldots, N_F$, $Q_i \in V_i'$, with
\begin{equation}
Q_i:=\jump{\mathbf{K}_\D \nabla H_\D \cdot \n}_{\Gamma_i}+\beta H_{\D|\Gamma_i},
\label{Qvar}
\end{equation}
thus again being a linear combination of the jump of the co-normal derivative of $H_\D$ across interface $\Gamma_i$ and the trace of $H_\D$ on $\Gamma_i$, and $\Vrm_i'$ the dual of $\Vrm_i$. We remark that, as $H_\D\in \Hrm^1_{\Omega}(\D)$ the hydraulic head is continuous across interfaces $\Gamma_i \equiv F_i \subset \D$. 

We also define the bilinear forms: $\aD : \Hrm^1_{\Omega}(\D)\times \Hrm^1_{\Omega}(\D) \mapsto \mathbb{R}$,
$$\aD\left(v, w \right)=\int_\D \mathbf{K}_\D \nabla v \nabla w \, \mathrm{d}\D +\beta \sum_{i=1}^{N_F} \int_{\Gamma_i} v_{|\Gamma_i} w_{|\Gamma_i} \, \mathrm{d}\Gamma;$$
for all $i=1,\ldots,N_F$, bilinear forms $\ai : \Vrm_i\times \Vrm_i \mapsto \mathbb{R}$, 
$$\ai\left(v_i, w_i \right)=\int_{F_i} \mathbf{K}_i \nabla_i v_i \nabla_i w_i \, \mathrm{d}F_i +\alpha \sum_{m\in \mathcal{S}_i} \int_{S_m} v_{i|S_m} w_{i|S_m} \, \mathrm{d}S;$$
$b_i: \Vrm_i'\times \Vrm_i \mapsto \mathbb{R}$
$$b_i\left(q,v\right)=\left\langle q, v\right \rangle_{\Vrm_i',\Vrm_i} $$
and, for $m=1,\ldots,N_S$, form $c^m: \Ucal^m\times \Hcal^m \mapsto \mathbb{R}$,
$$c^m\left(u^m,v\right)=\left\langle u^m,v\right\rangle_{\Ucal^m,\Hcal^m}. $$

Then, problem \eqref{MFcont}-\eqref{Strong_neu} can be written in weak formulation as: find $H_{\D} \in \Hrm^1_{\Omega}(\D)$, $H_i\in \Vrm_i$, $Q_i\in\Vrm_i'$, $U_i^m\in\Ucal_i^m$, $m\in\mathcal{S}_i$, such that, for all $v\in \Hrm^1_{\Omega}(\D)$, for all $w_i\in \Vrm_i$, $i=1,\ldots,N_F$:
\begin{eqnarray}
&&\aD\left(H_\D, v \right)-\sum_{i=1}^{N_F}b_{i}\left(Q_i,v_{|\Gamma_i}\right)=\left\langle f,v\right\rangle_{(\Hrm^1_{\Omega})',\Hrm^1_{\Omega}} \label{STDweak1}\\
&&\ai\left(H_i, w_i \right)-\beta\left( H_{\D|F_i},w_i\right)_{F_i}-\!\!\sum_{m\in\mathcal{S}_i} \!\! c^m\!\left(U_i^m,w_{i|S_m}\right)=-b_i\left(Q_i,w_i\right)\!, \label{STDweak2}
\end{eqnarray}
being $\left(v,w\right)_{\omega}$ the scalar product in $\mathrm{L}^2(\omega)$.
The coupling conditions in weak form are given by: for all  $i=1,\ldots,N_F$, and for all $m=1,\ldots,N_S$
\begin{eqnarray}
b_i\left(H_{\D|F_i}-H_i,\mu_i \right)&=&0, \quad \forall \mu_i\in \Vrm_i',\label{STDweak3}\\
c^m\left(\eta^m,H_{i|S_m}-H_{j|S_m}\right)&=&0, \quad \forall \eta^m\in \Ucal^m, \ {i,j}=\sigma(m) \label{STDweak4}\\
c^m\left(U_i^m+U_j^m,\nu^m\right)&=&0,  \quad \forall \nu^m\in \Hcal^m, \ {i,j}=\sigma(m). \label{STDweak5} 
\end{eqnarray}
Parameters $\alpha>0$ and $\beta>0$ ensure stability of the problems written independently on each fracture and on the three dimensional domain. This is required to obtain a discrete formulation suitable for parallel computing. If $N_F=1$, or equivalently $\mathcal{S}=\emptyset$, well posedness of the problem on the unique fracture $F_1$ is guaranteed by the assumption on $|\gamma_{1D}|>0$.

Problem \eqref{STDweak1}-\eqref{STDweak5} is well posed. To show this, let us introduce the function space $\Hrm^1_{\Omega^+}(\D)$ defined as:
\begin{eqnarray*}
&&\Hrm^1_{\Omega^+}(\D)=\left\lbrace v \in \Hrm^1_0(\D): v_i:=v_{|F_i}\in \Vrm_i, \ \forall i=1,\ldots,N_F,\right.\\ 
&& \hspace{1cm} \left. v_{i|S_m}=v_{j|S_m}, \ \forall m=1,\ldots,N_S, i,j=\sigma(m)\right\rbrace
\end{eqnarray*}
and thus incorporating the matching conditions at the interfaces. Let us then write the following problem: find $H\in\Hrm^1_{\Omega^+}(\D)$ such that, for all $v\in \Hrm^1_{\Omega^+}(\D)$
\begin{equation}
\left(\K \nabla H, \nabla v\right)_{\D} + \sum_{i=1}^{N_F} \left(\K_i \nabla_i H_i, \nabla_i v_i\right)_{F_i}=\left\langle q,v\right\rangle. \label{weakEquiv}
\end{equation}
Problem \eqref{weakEquiv} is well posed, as it can be easily seen that $\Hrm^1_{\Omega^+}(\D)$ is an Hilbert space with the scalar product, \cite{BPSf}:
$$\left(v,w\right)_{\Hrm^1_{\Omega^+}}:=\left(\K \nabla v, \nabla w\right)_{\Dt} + \sum_{i=1}^{N_F} \left(\K_i \nabla_i w_i, \nabla_i v_i\right)_{F_i}.$$
Problem \eqref{STDweak1}-\eqref{STDweak5} is equivalent to problem \eqref{weakEquiv}; indeed, recalling that, for $v\in \Hrm^1_{\Omega^+}(\D)$, conditions \eqref{STDweak3}-\eqref{STDweak4} are satisfied by construction. Moreover summing \eqref{STDweak2} for $i=1,\ldots,N_F$ and \eqref{STDweak1}, using \eqref{STDweak5} and the definition of $U_i^m$ and $Q_i$, for $i=1,\ldots,N_F$, $m\in\mathcal{S}_i$, we get \eqref{weakEquiv}.
We propose a reformulation of problem \eqref{STDweak1}-\eqref{STDweak5} well suited for discretization on non conforming meshes and parallel computing, based on a PDE constrained optimization approach. To this end, we introduce a cost functional expressing the error in the fulfilment of the interface conditions as continuity and flux conservation:
\begin{eqnarray*}
&& J(H_\D,H_\Fcal,U_\mathcal{S}):=\sum_{i=1}^{N_F}\left(\|H_{\D|F_i}-H_i\|^2_{\Vrm_i}\right)+ \nonumber \\ 
&&\hspace{1cm} \sum_{m=1}^{N_S}\left(\|H_{i|S_m}-H_{j|S_m}\|^2_{\Hcal^m}+\|U_i^m+U_j^m-\alpha\left(H_{i|S_m}+H_{j|S_m}\right)\|^2_{\Ucal^m}\right), \label{Functional}
\end{eqnarray*}
being $H_\Fcal:=\prod_{i=1}^{N_F} H_i$ and $U_\mathcal{S}=\prod_{m=1}^{N_S}\prod_{i\in\sigma(m)}U_i^m$.
The solution to problem \eqref{STDweak1}-\eqref{STDweak5} is obtained as the minimum of functional $J(H_\D,H_\Fcal,U_\mathcal{S})$ constrained by the PDE equations on the 3D domain and on the fractures: 
\begin{eqnarray}
\min J(H_\D,H_\Fcal,U_\mathcal{S})&& \\
\text{constrained by}& \text{\eqref{STDweak1}}& \\
\text{and by} &  \text{\eqref{STDweak2}} &\forall i=1,\ldots,N_F.
\end{eqnarray}

\section{Discrete formulation}
\label{DiscrProb}

\begin{table*}
\centering
\caption{Labels used for the dimension of discrete variables} \label{DimTab}
\begin{tabular}{clc}
Label & Description & Definition \\
\hline
$\ND$ & Number of dofs for $h_\D$ & \\
$\NFi$ & Number of dofs for $h_i$ on $\TFi$ & \\
$\NGi$ & Number of dofs for $q_i$ on $\TGi$ &\\
$\NS$ & Number of dofs for $u_i^m$ on $\TS$ & \\
$\Nhf$ & Number of dofs for $h_\Fcal$ & $\sum_{i=1}^{N_F} \NFi$\\
$\NGt$ & Number of dofs for $q$ & $\sum_{i=1}^{N_F} \NGi$\\
$\NSi$ & Number of dofs for $u_i$ & $\sum_{m\in\mathcal{S}_i} \NS$\\
$\NSip$ & Number of dofs for $u^+$ & $\sum_{m\in\mathcal{S}_i}\sum_{k\in\sigma(m)} \NS$\\
$\NSt$ & Number of dofs for $u$ & $\sum_{m=1}^{N_S}\sum_{k\in\sigma(m)} \NS$\\
$\Nht$ & Number of dofs for $h$ &  $\Nhf+ \ND$\\
\hline
\end{tabular}
\end{table*}

The PDE constrained optimization formulation is specifically designed to allow for an easy discretization of the problem using non conforming meshes and to obtain a discrete problem suitable for effective resolution using parallel computing resources. The imposition of the interface constraints expressed by equations \eqref{STDweak3}-\eqref{STDweak5} with a standard approach requires some sort of mesh conformity at the interfaces: either a perfect matching of the nodes on the meshes to enforce conditions by means of degrees of freedom equality constraints, or the weaker condition of alignment of mesh edges with the interfaces, to use mortaring techniques. In contrast, the imposition of interface conditions through the functional only requires the computation of integrals on the traces, as shown below, and thus meshes can be arbitrarily placed with respect to the interfaces, see Figure~\ref{polygonalMesh} for an example of non-conforming meshes in the rock matrix and on the fractures. Further, the minimization process allows to decouple the problems on the fractures and on the three-dimensional domain, for parallel computing.

The discretization strategy proposed in this work is based on the use of standard finite elements on tetrahedra for the three dimensional domain and finite elements on triangles for the fractures. Let us then denote by $\TD$ the tetrahedral mesh on $\D$, characterized by a mesh parameter $\delta_\D$, by $\TFi$ a triangular mesh on $F_i$, $i=1,\ldots,N_F$, with mesh parameter $\delta_{F_i}$, and by $\TGi$ a possibly different triangular mesh on $F_i$, with mesh parameter $\delta_{\Gamma_i}$. We further introduce a discretization of the one-dimensional traces, different on each fracture, denoted by $\TS$, with mesh parameter $\delta_{S_m,i}$, $i=1,\ldots,N_F$, $m\in\mathcal{S}_i$. We denote by $h_\D$ the finite dimensional approximation of $H_\D$ on $\TD$, $h_\D=\sum_{k=1}^{\ND} h_{\D,k} \phi_k$, with $\ND$ the number of degrees of freedom (dofs) and $\phi_k$ a finite element basis function in 3D; for $i=1,\ldots,N_F$, we further call $h_i$ the approximation of $H_i$ on $\TFi$, $h_i=\sum_{k=1}^{\NFi} h_{i,k} \psi_{i,k}$, with $\NFi$ the number of dofs and $\psi_{i,k}$ a 2D basis function; $q_i$ the approximation of $Q_i$ on $\TGi$, $q_i=\sum_{k=1}^{\NGi} q_{i,k}\varphi_{i,k}$ having $\NGi$ dofs, and $\varphi_{i,k}$ one basis function; $u_i^m$ the approximation of $U_i^m$ on $\TS$, $u_i^m=\sum_{k=1}^{\NS} u_{i,k}^m \varrho_{i,k}^m$, with $\NS$ dofs and $\varrho_{i,k}^m$ a basis function. Tables~\ref{DimTab}-\ref{MatrixTab} summarize the labels used for the dimensions of the discrete variables, the name used to denote the basis functions and the notation used, in the following, for the matrices collecting integrals of these basis functions. 
We build arrays of dofs by collecting column-wise the dofs of each discrete function and with abuse of notation we denote the dof array with the same symbol of the corresponding function, thus having arrays $h_\D\in \mathbb{R}^{\ND}$, $h_i\in\mathbb{R}^{\NFi}$, $q_i\in\mathbb{R}^{\NGi}$, $i=1,\ldots,N_F$, and $u_i^m\in\mathbb{R}^{\NS}$, $m\in\mathcal{S}_i$. We define arrays $u^m$, $m=1,\ldots,N_S$, as $u^m=[(u_i^m)^T \ (u_j^m)^T]^T$ for $i,j=\sigma(m)$ with $i<j$, and we further collect column wise arrays $h_i$, $q_i$, $u_i^m$ and $u^m$ forming:
\begin{displaymath}
h_\Fcal=\begin{bmatrix}
h_1\\
\vdots\\
h_{N_F}
\end{bmatrix},
\
q=\begin{bmatrix}
q_1\\
\vdots\\
q_{N_F}
\end{bmatrix},
\
u_i=\begin{bmatrix}
u_i^{m_1}\\
\vdots\\
u_i^{m_{\sharp\mathcal{S}_i}}
\end{bmatrix},
\
u_i^+=\begin{bmatrix}
u^{m_1}\\
\vdots\\
u^{m_{\sharp\mathcal{S}_i}}\\
\end{bmatrix},
\
u=\begin{bmatrix}
u^{1}\\
\vdots\\
u^{m}\\
\vdots\\
u^{N_S}
\end{bmatrix},
\end{displaymath}
where $m_1,\ldots,m_{\sharp\mathcal{S}_i}$ are the indexes in $\mathcal{S}_i$ ordered increasingly.

\begin{table}
\centering
\caption{Summary of nomenclature used for the discrete matrices: involved discrete function names and basis functions} \label{MatrixTab}
\begin{tabular}{cccc}
Matrix letter & function(s) & basis functions & Integration domain\\
\hline
$\Amat_\D$ & $h_\D$ & $\phi$ & $\D$\\
$\Amat_i$ & $h_i$ & $\psi_i$ & $F_i$ \\
$\Bmat_{ij}^m$ & $h_i$,$u_i^m$ & $\psi_i$, $\varrho_j^m$ & $S_m$\\
$\Cmat_{ij}^m$ & $u_i^m$, $u_j^m$ & $\varrho_i^m$, $\varrho_j^m$ & $S_m$\\
$\Dmat_i$ & $h_i$, $q$ & $\psi_i$, $\varphi_i$ & $F_i$ \\
$\Emat_i$ & $h_\D$, $q$ & $\phi$, $\varphi_i$ & $F_i$ \\
$\Gmat_\D^{i}$ & $h_\D$ & $\phi$ & $F_i$ \\
$\Gmat_{F}^{i}$ & $h_i$ & $\psi_i$ & $F_i$ \\
$\Gmat_{\D F}^{i}$ & $h_\D$, $h_i$ & $\phi$, $\psi_i$ & $F_i$ \\
$\Gmat_{ij}^{m}$ & $h_i$, $h_j$ & $\psi_i$, $\psi_j$  & $S_m$ \\
\hline
\end{tabular}
\end{table}

The discrete version of functional $J$ is the following:
\begin{eqnarray}
&&\Jd(h_\D,h_\Fcal,u)=\sum_{i=1}^{N_F}\|h_{\D|F_i}-h_i\|^2_{\Lrm^2(F_i)}+ \nonumber\\
&&\sum_{m=1}^{{N_S}} \left(\|h_i-h_j\|^2_{\Lrm^2(S_m)}+\|u_i^m+u_j^m-\alpha(h_i+h_j)\|^2_{\Lrm^2(S_m)}\right)
\end{eqnarray}
obtained replacing the discretized variables and using $\Lrm^2$ norms. The discrete functional can be written in matrix form, computing the integrals of the basis functions and collecting the values into matrices. Considering the first norm in $\Jd$, we have:
\begin{displaymath}
\|h_{\D|F_i}-h_i\|^2_{\Lrm^2(F_i)}=\int_{F_i}\left( \sum_{k=1}^{\ND} h_{\D,k}\phi_{k|F_i}-\sum_{j=1}^{\NFi}h_{i,j}\psi_{i,j}\right)^2 \, \mathrm{d}F_i 
\end{displaymath}
and we can define three matrices as follows, for each $i=1,\ldots,N_F$, $\Gmat_{\D}^i\in\mathbb{R}^{\ND\times\ND}$, $\Gmat_{\D F}^i\in\mathbb{R}^{\ND\times\NFi}$, $\Gmat_{F}^i\in\mathbb{R}^{\NFi\times\NFi}$:
\begin{displaymath}
(\Gmat_{\D}^i)_{k,\ell}=\int_{F_i} \phi_{k|F_i}\phi_{k|F_i}, \quad (\Gmat_{\D F}^i)_{k,\ell}=\int_{F_i} \phi_{k|F_i}\psi_{i,\ell}, \quad (\Gmat_{F}^i)_{k,\ell}=\int_{F_i} \psi_{i,k}\psi_{i,\ell}
\end{displaymath}
such that 
\begin{displaymath}
\|h_{\D|F_i}-h_i\|^2_{\Lrm^2(F_i)}= 
\begin{bmatrix}
h_\D^T &
h_i^T
\end{bmatrix}
\begin{bmatrix}
\Gmat_{\D}^i & -\Gmat_{\D F}^i\\
-(\Gmat_{\D F}^{i})^T & \Gmat_{F}^i
\end{bmatrix}
\begin{bmatrix}
h_\D\\
h_i
\end{bmatrix}.
\end{displaymath}
The computation of matrix $\Gmat_{\D F}^i$ is not straightforward, as the two involved variables are defined on different meshes. In particular, the intersection of the three dimensional tetrahedral mesh with the fracture plane needs to be computed. This operation defines a polygonal tessellation of $F_i$ which is then sub-triangulated, thus generating a triangular interface mesh. This sub-triangulation process can be performed without any mesh quality requirement, as the resulting mesh is used only for quadrature purposes. The computation of the elements in $\Gmat_{\D F}^i$ is finally performed first computing the intersection of the elements of the interface mesh with the elements in $\TFi$, and subsequently the required integral on the intersection region. Element neighbourhood  information is used to efficiently perform the task. The computation of the interface mesh is a quite complex and expensive task. Also in this case element neighbourhood information is used for efficiency, and further can be performed independently fracture by fracture and thus in parallel, which is of paramount importance for the applicability of the method to complex geometries.
\begin{figure}
\centering
\includegraphics[width=0.95\textwidth]{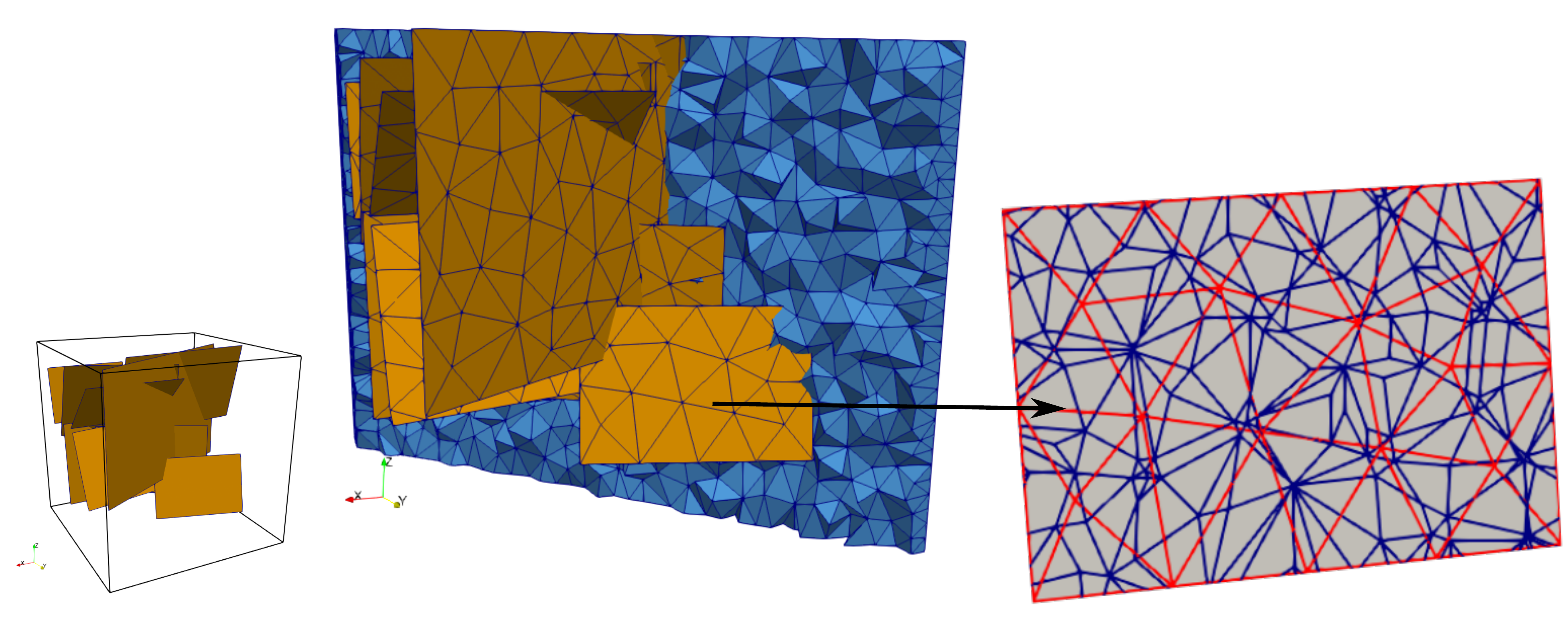}
\caption{Polygonal tessellation on a sample fracture given by the intersection of a tetrahedral mesh with the fracture plane (blue in the right panel) overlapped with the fracture triangular mesh (red in the right panel)}
\label{polygonalMesh}
\end{figure}

We can proceed similarly with the remaining terms of the functional $\Jd$; to this end, for $m=1,\ldots,N_S$,  and all the possible couples of indexes $i,j$ such that $i,j=\sigma(m)$, we define matrices $\Gmat_{ij}^{m}\in\mathbb{R}^{\NFi\times\mathcal{N}_{F_j}}$, $\Bmat_{ij}^{m}\in\mathbb{R}^{\NFi\times\mathcal{N}_j^m}$, $\Cmat_{ij}^{m}\in\mathbb{R}^{\NS\times\mathcal{N}_j^m}$:
\begin{displaymath}
(\Gmat_{ij}^{m})_{k,\ell}=\int_{S_m} \psi_{i,k|S_m}\psi_{j,\ell|S_m}, \ (\Bmat_{ij}^{m})_{k,\ell}=\int_{S_m} \psi_{i,k|S_m}\varrho^m_{j,\ell},  \ (\Cmat_{ij}^{m})_{k,\ell}=\int_{S_m} \varrho^m_{i,k}\varrho^m_{j,\ell}
\end{displaymath}
such that
\begin{displaymath}
\|h_i-h_j\|^2_{\Lrm^2(S_m)}=\begin{bmatrix}
h_i^T & h_j^T
\end{bmatrix}
\begin{bmatrix}
\Gmat_{ii}^{m} & -\Gmat_{ij}^{m} \\
-\Gmat_{ji}^{m} & \Gmat_{jj}^{m}
\end{bmatrix}
\begin{bmatrix}
h_i\\
h_j
\end{bmatrix},
\end{displaymath}
and
\begin{eqnarray*}
&&\|u_i^m+u_j^m-\alpha\left(h_i+h_j\right)\|^2_{\Lrm^2(S_m)}=\\
&&\left(
\begin{bmatrix}
(u_i^m)^T & (u_j^m)^T
\end{bmatrix}
\begin{bmatrix}
\Cmat_{ii}^{m} & \Cmat_{ij}^{m} \\
\Cmat_{ji}^{m} & \Cmat_{jj}^{m}
\end{bmatrix}
-2 \alpha
\begin{bmatrix}
h_i^T & h_j^T
\end{bmatrix}
\begin{bmatrix}
\Bmat_{ii}^{m} & \Bmat_{ij}^{m}\\
\Bmat_{ji}^{m} & \Bmat_{jj}^{m} 
\end{bmatrix}
\right)
\begin{bmatrix}
u_i^m \\
u_j^m
\end{bmatrix}\\
&&+\alpha^2
\begin{bmatrix}
h_i^T & h_j^T
\end{bmatrix}
\begin{bmatrix}
\Gmat_{ii}^{m} & \Gmat_{ij}^{m} \\
\Gmat_{ji}^{m} & \Gmat_{jj}^{m}
\end{bmatrix}
\begin{bmatrix}
h_i \\
h_j
\end{bmatrix}.
\end{eqnarray*}
We can collect these matrices, defined locally at the various interfaces into global matrices to derive a compact form of the functional. Let us define matrix $\Gmat^{\mathcal{S}}\in\mathbb{R}^{\Nhf \times \Nhf}$, $\Nhf=\sum_{i=1}^{N_F} \NFi$, as a $N_F \times N_F$ block matrix, with diagonal blocks in positions $i$-$i$ are given by $(1+\alpha^2)\sum_{m\in\mathcal{S}_i} \Gmat_{ii}^{m}$, $i=1,\ldots,N_F$. Extra diagonal blocks in positions $i$-$j$ ($i\neq j$) are instead equal to $(\alpha^2-1) \Gmat_{ij}^{m}$, if $i,j=\sigma(m)$, or a zero block otherwise. Further let us define matrix $\Gmat_\D^\Fcal=\sum_{i=1}^{N_F} \Gmat_\D^i$, and matrices $\Gmat_{\D F}^\Fcal \in\mathbb{R}^{\ND\times\Nhf}$ and $\Gmat_{F}^\Fcal \in\mathbb{R}^{\Nhf\times\Nhf}$ respectively as
\begin{displaymath}
\Gmat_{\D F}^\Fcal=\begin{bmatrix}
\Gmat_{\D F}^1 \cdots \Gmat_{\D F}^i \cdots \Gmat_{\D F}^{N_F}
\end{bmatrix}, \quad
\Gmat_F^\Fcal=\text{diag}\left(\Gmat_F^1,\ldots,\Gmat_F^i,\ldots,\Gmat_F^{N_F}\right)
\end{displaymath}
Matrix $\Gmat\in\mathbb{R}^{\Nht\times\Nht}$, $\Nht=\Nhf+\ND$, is finally set as
\begin{displaymath}
\Gmat= \begin{bmatrix}
\Gmat_\D^\Fcal & \Gmat_{\D F}^\Fcal \\
(\Gmat_{\D F}^\Fcal)^T & \Gmat_F^\Fcal+\Gmat^\mathcal{S}
\end{bmatrix}.
\end{displaymath}
For all $i=1,\ldots,N_F$, let us assemble matrices $\Bmat_i^+\in\mathbb{R}^{\NFi\times\NSip}$, $\NSip=\sum_{m\in\mathcal{S}_i}\sum_{k\in\sigma(m)} \mathcal{N}_k^m$, collecting row-wise matrices $[\Bmat_{ij}^m \ \Bmat_{ik}^m]$ , for increasing values of $m\in\mathcal{S}_i$ and $j,k=\sigma(m)$, $j<k$, i.e.:
\begin{displaymath}
\Bmat_i^+= \begin{bmatrix}
\Bmat_{ij}^{m_1} & \Bmat_{ik}^{m_1} & \cdots & \Bmat_{ip}^{m_{\sharp\mathcal{S}_i}} & \Bmat_{iz}^{m_{\sharp\mathcal{S}_i}}
\end{bmatrix}
\end{displaymath}
with $p,z=\sigma(m_{\sharp\mathcal{S}_i})$, $p<z$. Let us introduce matrices $\Rmat_i^+\in\mathbb{R}^{\NSip\times\NSt}$, with $\NSt=\sum_{m=1}^{N_S}\sum_{k\in\sigma(m)} \mathcal{N}_k^m$, defined such that $u_i^+=\Rmat_i^+ u$. Matrix $\Bmat^+\in \mathbb{R}^{\Nhf\times \NSt}$ is finally obtained collecting column wise matrices $\Bmat_i^+\Rmat_i^+$.  Matrix $\Cmat \in \mathbb{R}^{\NSt\times \NSt}$, is a block diagonal matrix with $N_S\times N_S$ diagonal blocks, each diagonal block in position $m$-$m$  being equal to
\begin{displaymath}
\begin{bmatrix}
\Cmat_{ii}^{m} & \Cmat_{ij}^{m} \\
\Cmat_{ji}^{m} & \Cmat_{jj}^{m}
\end{bmatrix}, \quad i,j=\sigma(m), \ i<j
\end{displaymath}
The matrix formulation of $\Jd$ then reads as:
\begin{displaymath}
\Jd= \begin{bmatrix}
h_\D^T & h_\Fcal^T
\end{bmatrix}
\Gmat 
\begin{bmatrix}
h_\D\\
 h_\Fcal
\end{bmatrix}
+ u^T \Cmat u -\alpha h_\Fcal^T \Bmat^+ u -\alpha u^T {\Bmat^+}^T h_\Fcal.
\end{displaymath}
We can re-write also the discrete constraint equations in matrix form. We follow a standard procedure and we define matrix $\Amat_\D\in\mathbb{R}^{\ND\times\ND}$ as
\begin{displaymath}
(\Amat_\D)_{k\ell}=\int_\D \K_\D \nabla \phi_k \nabla \phi_\ell +\beta \sum_{i=1}^{N_F} \int_{F_i}  \phi_{k|F_i} \phi_{\ell|F_i},
\end{displaymath}
where the integral on $F_i$ is performed on the interface mesh, generated by the intersection of the tetrahedral mesh with each fracture. Matrices  $\Amat_i\in\mathbb{R}^{\Nhf\times\Nhf}$, for $i=1,\ldots,N_F$ are defined by
\begin{displaymath}
(\Amat_i)_{k\ell}=\int_{F_i} \K_i \nabla_i \psi_{i,k} \nabla_i \psi_{i,\ell} +\alpha \sum_{m\in\mathcal{S}_i} \int_{S_m}  \psi_{i,k|S_m} \psi_{i,\ell |S_m}
\end{displaymath}
which form the diagonal blocks of block diagonal matrix $\Amat_\Fcal\in\mathbb{R}^{\Nhf\times\Nhf}$, $\Amat_\Fcal=\text{diag}(\Amat_1,\ldots,\Amat_{N_F})$.
We introduce, for each fracture $F_i$, $i=1,\ldots,N_F$ matrices $\Rmat_i\in\mathbb{R}^{\NSi \times \NSt}$, $\NSi=\sum_{m\in\mathcal{S}_i} \NS$, defined such that $u_i=\Rmat_i u$, and $\Bmat_i\in\mathbb{R}^{\NFi\times \NSi}$ obtained collecting row-wise matrices $\Bmat_{ii}^m$ for all $m\in\mathcal{S}_i$. These matrices are used for the definition of matrix $\Bmat\in\mathbb{R}^{\Nhf\times\NSt}$, defined grouping column-wise matrices $\Bmat_i$, for $i=1,\ldots,N_F$.
Matrix $\Dmat\in\mathbb{R}^{\Nhf}\times \NGt$ is built as follows:
\begin{displaymath}
\Dmat= \begin{bmatrix}
\Dmat_1 & \cdots & \Dmat_{N_F}
\end{bmatrix}, \quad
(\Dmat_i)_{k\ell}=\int_{F_i} \psi_{i,k}\varphi_{i,\ell}
\end{displaymath}
with integrals computed on the intersection of the mesh $\TGi$ for variable $q_i$ with the mesh $\TFi$ for $h_i$.
We finally introduce matrix $\Emat\in\mathbb{R}^{\ND\times \NGt}$, $\NGt=\sum_{i=1}^{N_F} \NGi$ as 
\begin{displaymath}
\Emat= \begin{bmatrix}
\Emat_{1} & \cdots & \Emat_{N_F}
\end{bmatrix}, \quad
(\Emat_{i})_{k\ell}=\int_{F_i} \phi_{k|F_i} \varphi_{i,\ell}.
\end{displaymath}
where integrals are computed intersecting the mesh $\TGi$ for variable $q_i$ with the triangulated interface mesh given by the intersection between the tetrahedral mesh with the fracture $F_i$. 
 
Setting $h^T=[h_\D^T \ h_\Fcal^T]$, the discrete formulation of the constrained minimization process is:
\begin{eqnarray}
&& \min\left(h^T
\Gmat h + u^T \Cmat u -\alpha h_\Fcal^T \Bmat^+ u -\alpha u^T {\Bmat^+}^T h_\Fcal\right) \label{Pdiscr1}\\
&&\text{constrained by} \nonumber \\
&& \Amat_\D h_\D - \Emat q = b_\D \label{Pdiscr2} \\
&& \Amat_\Fcal h_\Fcal - \beta (\Gmat_{\D F}^\Fcal)^T h_\D - \Bmat u + \Dmat q=0 \label{Pdiscr3} 
\end{eqnarray}
being $b_\D \in\mathbb{R}^{\ND}$ the array resulting from the forcing term.

Let us now introduce the following matrices:
\begin{equation}
\bf{\mathcal{A}}=
\begin{bmatrix}
\Amat_\D & \Omat \\
-\beta (\Gmat^{\D F})^T & \Amat_\Fcal
\end{bmatrix}, \
\bf{\mathcal{B}}=
\begin{bmatrix}
\Emat & \Omat \\
-\Dmat & \Bmat
\end{bmatrix}, \
\bf{\mathcal{B}}^+=
\begin{bmatrix}
\Omat & \Omat \\
\Omat & -\alpha\Bmat^+
\end{bmatrix}, \
\bf{\mathcal{C}}=
\begin{bmatrix}
\Omat & \Omat \\
\Omat & \Cmat
\end{bmatrix}
\label{KKTmatrices}
\end{equation}
and let us collect column-wise variables $q,u$ into variable $w$, then optimality conditions for problem \eqref{Pdiscr1}-\eqref{Pdiscr3} are given by the following linear system:
\begin{equation}
{\bf{\mathcal{M}}}=\begin{bmatrix}
\Gmat & {\bf{\mathcal{B}}}^+ & {\bf{\mathcal{A}}}^T \\
{\bf{\mathcal{B}}^+}^T & \bf{\mathcal{C}} & -{\bf{\mathcal{B}}}^T \\
\bf{\mathcal{A}} & -\bf{\mathcal{B}} & \Omat
\end{bmatrix}, \quad
{\bf{\mathcal{M}}}
\begin{bmatrix}
h\\
w\\
\lambda
\end{bmatrix}=
\begin{bmatrix}
0\\
0\\
b
\end{bmatrix}
\label{KKTsystem}
\end{equation}
with $b^T=[b_\D^T \ \Omat^T]$. Well posedness of problem \eqref{Pdiscr1}-\eqref{Pdiscr3} derives from non singularity of the saddle point matrix ${\bf{\mathcal{M}}}$.

\begin{lemma}
Let matrices $\bf{\mathcal{A}}$, $\bf{\mathcal{B}}$ be defined as in \eqref{KKTmatrices}. Let  $\bf{\mathcal{A}}$ be full rank, let $\bf{\mathcal{L}}=[\bf{\mathcal{A}} \ \ -\bf{\mathcal{B}}]$, and let $\Zmat$ be a matrix obtained collecting row-wise column vectors $z_k$, $k=1,\ldots,\Nw$, $\Nw=\NSt+\NGt$ forming a basis of $\ker(\bf{\mathcal{L}})$, then matrix $\Zmat^T\Gmat \Zmat$ is positive definite. \label{lemma1}
\end{lemma}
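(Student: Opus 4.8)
The plan is to read $\Zmat^{T}\Gmat\Zmat$ as the reduced Hessian, on the feasible subspace $\ker(\mathcal{L})$, of the quadratic objective minimised in \eqref{Pdiscr1} (so that here $\Gmat$ is understood as the \emph{full} objective matrix, the one assembled from $\Gmat$, $\Cmat$ and $\Bmat^{+}$, whose value is exactly $\Jd$). Since $\mathcal{A}$ is full rank and square it is invertible, hence $\mathcal{L}=[\mathcal{A}\ \ -\mathcal{B}]$ has full row rank and $\dim\ker(\mathcal{L})=\Nw=\NSt+\NGt$; the columns $z_k$ therefore form a genuine basis and every feasible direction is $x=[h^{T}\ w^{T}]^{T}=\Zmat y$ for a unique $y$, with $w=[q^{T}\ u^{T}]^{T}$. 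For such an $x$ one has $y^{T}\Zmat^{T}\Gmat\Zmat y=\Jd(h_\D,h_\Fcal,u)$, which is a sum of squared $\Lrm^{2}$-norms and thus nonnegative, so $\Zmat^{T}\Gmat\Zmat$ is automatically positive semidefinite. The whole task reduces to showing its kernel is trivial, i.e.\ that $x\in\ker(\mathcal{L})$ with $\Jd=0$ forces $x=0$.

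First I would translate the two hypotheses on such an $x$. Vanishing of $\Jd$, being a sum of squares, yields the three interface identities $h_{\D|F_i}=h_i$ in $\Lrm^{2}(F_i)$, $h_i=h_j$ in $\Lrm^{2}(S_m)$, and $u_i^m+u_j^m=\alpha(h_i+h_j)$ in $\Lrm^{2}(S_m)$. Membership in $\ker(\mathcal{L})$ gives the homogeneous constraints \eqref{Pdiscr2}--\eqref{Pdiscr3} with $b_\D=0$, namely $\Amat_\D h_\D=\Emat q$ and $\Amat_\Fcal h_\Fcal-\beta(\Gmat_{\D F}^\Fcal)^{T}h_\D-\Bmat u+\Dmat q=0$.

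The core step is an energy identity: I would test the first constraint with $h_\D$, the second with $h_\Fcal$, and add them. The mixed $q$-terms $-h_\D^{T}\Emat q+h_\Fcal^{T}\Dmat q=\sum_i\int_{F_i}(h_i-h_{\D|F_i})q_i$ vanish by the first identity; the $\beta$-terms collapse to $\beta\sum_i\int_{F_i}h_{\D|F_i}(h_{\D|F_i}-h_i)=0$ for the same reason; and, grouping the trace contributions by fracture pair, the continuity $h_i=h_j$ together with the flux identity $u_i^m+u_j^m=\alpha(h_i+h_j)$ makes the stabilisation contribution $\alpha\int_{S_m}(h_i^2+h_j^2)$ cancel the control contribution $\int_{S_m}(h_iu_i^m+h_ju_j^m)$ on every trace, both equal to $2\alpha\int_{S_m}h_i^2$. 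What survives is
\[
\int_\D\K_\D\nabla h_\D\cdot\nabla h_\D+\sum_{i=1}^{N_F}\int_{F_i}\K_i\nabla_i h_i\cdot\nabla_i h_i=0 .
\]
As $\K_\D$ and $\K_i$ are symmetric positive definite, $h_\D$ and each $h_i$ are constant; the homogeneous Dirichlet datum on $\Gamma_D$ then forces $h_\D\equiv0$, and the first interface identity propagates this to $h_i\equiv0$, so $h=0$.

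It remains to show $w=0$, and I expect this to be the main obstacle, since coercivity is of no help once $h$ has been eliminated. With $h=0$ feasibility reduces to $\mathcal{B}w=0$ (explicitly $\Emat q=0$ and $\Dmat q=\Bmat u$), while the flux identity becomes $u_i^m+u_j^m=0$ on every trace. The essential ingredient here is the injectivity of the discrete coupling operator $\mathcal{B}$ (full column rank, guaranteed for instance by injectivity of $\Emat$ and $\Bmat$), which encodes that the discrete flux/trace spaces are rich enough relative to the head spaces --- the discrete counterpart of the inf--sup compatibility behind the well-posedness of \eqref{weakEquiv}; the relation $u_i^m+u_j^m=0$ is available as an additional constraint should $\mathcal{B}$ alone not suffice. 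Granting this, $\mathcal{B}w=0$ gives $q=0$ and $u=0$, hence $x=0$; the kernel of $\Zmat^{T}\Gmat\Zmat$ is trivial and the reduced Hessian is positive definite.
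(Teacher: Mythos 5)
Your proof takes a genuinely different route from the paper's. The paper constructs the explicit basis $z_k=(\mathcal{A}^{-1}\mathcal{B}e_k,\,e_k)$ of $\ker(\mathcal{L})$ and verifies $z_k^T\Gmat z_k>0$ one basis vector at a time, by a case analysis on whether $e_k$ activates a $q_i$ or a $u_i^m$ degree of freedom and by arguing that the corresponding single--source problem must produce a nonzero interface mismatch. You instead argue globally: positive semidefiniteness is immediate because $x^T\Gmat x=\Jd(x)$ is a sum of squares, and positive definiteness reduces to showing that $x\in\ker(\mathcal{L})$ with $\Jd(x)=0$ forces $x=0$. Your energy identity (testing the two homogeneous constraints with $h_\D$ and $h_\Fcal$ and using the three interface identities to cancel the coupling, stabilisation and control terms) is correct and is in fact the structurally sounder half of the argument: verifying a quadratic form only on basis vectors, as the paper does, cannot by itself establish positive definiteness. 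Note, however, that the step ``$h_\D$ constant plus the Dirichlet datum gives $h_\D\equiv 0$'' needs the same case distinction the paper makes: the paper explicitly allows $\Gamma_D=\emptyset$, in which case you must fall back on $|\gamma_{1D}|>0$ (guaranteed only for $N_F=1$) to eliminate the constants.

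The genuine gap is the one you flag yourself: the step $w=0$. After $h=0$ the available information is $\Emat q=0$, $\Dmat q=\Bmat u$ and $u_i^m+u_j^m=0$ on each trace, and to conclude $q=0$, $u=0$ you must invoke full column rank of $\Emat$ and of the blocks $\Bmat_{ii}^m$. This is not among the hypotheses of the lemma and does not follow from $\mathcal{A}$ being full rank; it is a discrete compatibility (inf--sup type) condition between the multiplier meshes $\TGi$, $\TS$ and the traces of the primal finite element spaces, and it genuinely fails if the multiplier meshes are chosen too fine relative to the primal ones. ``Granting this'' therefore leaves the proof open exactly where the hard content of the lemma lies. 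The paper sidesteps an explicit rank assumption by working only with single basis functions ($q_i=\varphi_{i,j}$ or $u_i^m=\varrho^m_{i,j}$), for which $\Emat e_k\neq 0$ and $\Bmat e_k\neq 0$ hold since a single nonnegative basis function cannot be $\Lrm^2$--orthogonal to a partition of unity --- but that only covers basis directions, not arbitrary elements of $\ker(\mathcal{L})$. To close your argument you would need to state the injectivity of $\Emat$ and $\Bmat$ (restricted trace--by--trace) as an explicit hypothesis, or prove it for the specific discretisation adopted in the numerical section.
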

\begin{proof}
We start observing that matrix $\bf{\mathcal{A}}$ is full rank as both matrices $\Amat_\D$ and $\Amat_\Fcal$ are full rank under the assumption that $\alpha,\beta > 0$. Then $\dim(\ker({\bf\mathcal{L}}))= \Nw$.
To construct a basis of  $\ker(\mathcal{L})$, let us take  $e_k$, the $k$-th vector of the canonical basis of $\mathbb{R}^{\Nw}$, and let us set $z_k=({\bf{\mathcal{A}}}^{-1}{\bf{\mathcal{B}}} e_k,e_k)$. According to the index $k$, $e_k$ might correspond to a non-null function $q_i$ for some $i=1,\ldots,N_F$ or a non-null function $u_i^m$ for some $i=1,\ldots,N_F$, $m\in\mathcal{S}_i$. In both cases will show that $z_k^T \Gmat z_k > 0$.

\vskip 5pt
\noindent \textit{Non-null function $q_i$}

\noindent Let us start considering the first case and in particular let us assume that $q_i=\varphi_{i,j}$ for a certain index $j=1,\ldots,\NGi$. Let us consider two different scenarios: the case $N_F=1$, i.e. a porous medium with a single fracture and the more general case $N_F>1$. 
\begin{itemize}
\item $N_F=1$\\
As $q_1\neq 0$, for the non singularity of $\Amat_\D$, it is $h_\D\neq 0$, and in particular it can be either $h_\D=\,$const  or $h_\D\neq\,$const.
\begin{itemize}
\item[$\bullet$] $h_\D=\,$const$\, \neq 0$\\
As we assumed homogeneous Dirichlet conditions, $h_\D=\,$const$\,\neq 0$ is possible only if $\Gamma_D=\emptyset$. In virtue of definition \eqref{Qvar}, for the consistency and conformity of the method, we have $\beta h_{\D|F_1}= q_1$, which is possible only if $q_1$ is constant on $F_1$.  By \eqref{STDweak2}, being $|\gamma_{1D}|>0$, it is:
$$a_{F_1}(h_1,\psi_{1,\ell})=-b_1(q_1-\beta h_{\D|F_1},\psi_{1,\ell})=0, \quad \forall \ell=1,\ldots,\mathcal{N}_{h_1}$$
and thus  $h_1=0$, and in particular $\|h_{\D|F_1}-h_1\|_{\mathrm{L}^2(F_1)}>0$.
\item[$\bullet$] $h_\D\neq\,$const\\
Let us set $s_1:=q_1-\beta h_{\D|F_1}$, and let us consider equations~\eqref{STDweak1} and \eqref{STDweak2}, that become:
$$\left(\K_\D \nabla h_\D,\nabla \phi_\ell\right)_\D=b_1(s_1,\phi_{\ell|F_1}), \quad \forall \ell=1,\ldots,\ND$$
$$\left(\K_i \nabla_i h_i,\nabla_i \psi_{1,\ell}\right)_{F_1}=-b_1(s_1,\psi_{1,\ell}), \quad \forall \ell=1,\ldots,\mathcal{N}_{h_1}$$
where the source term $s_1$ appears with opposite sign. We thus have $\|h_{\D|F_1}-h_1\|_{\mathrm{L}^2(F_1)}>0$.
\end{itemize}
\item $N_F>1$\\
If $h_\D=\,$const$\, \neq 0$, proceeding similarly to the case $N_F=1$, we have $\beta h_{\D|F_i}=q_i\neq 0$ and $\|h_{\D|F_i}- h_i\|_{\mathrm{L}^2(F_i)}>0$.\\
If instead $h_\D\neq\,$const, we proceed in the following way: since $q_i\neq 0$ we have $h_i\neq 0$, whereas it is $h_j=0$, for all $j=1,\ldots,N_F$, $j\neq i$. Choosing, in particular, one index $j^\star$ such that fracture $F_i$ and $F_{j^\star}$ intersect in a trace $S_m$, we have $\|h_i-h_{j^\star}\|_{\mathrm{L}^2(S_m)}>0 $.
\end{itemize}

\noindent \textit{Non-null function $u_i^m$}

\noindent Let us now consider $u_i^m=\varrho_{i,j}^m$ for some $i=1,\ldots,N_F$, $m\in\mathcal{S}_i$, and for an index $j=1,\ldots,\NS$, depending on the value of $k$. Also in this case it can be easily shown that we have $h_i\neq 0$, whereas we have $h_\D=0$ and $h_p=0$ for all $p=1,\ldots, N_F$, $p\neq i$, thus having again a non null functional value and thus $z_k^T \Gmat z_k > 0$.

\vskip 5pt
\noindent Being $\Gmat$ positive semi-definite by definition, it is $x^T \Gmat x \geq 0$ and $x^T \Gmat x=0$ if and only if $x \in \ker(\Gmat)$, \cite{HJ} and being  $z_k^T \Gmat z_k > 0$, $z_k \not\in \ker{(\Gmat)}$ for $z=1,\ldots,\Nw$. The space $\mathcal{Z}=\mathrm{span}\{z_1,\ldots,z_{N_w}\}$ is thus a subspace of $\mathrm{Im}(\Gmat)$, and each vector $y\in \mathcal{Z}$ can be written as $y=\Zmat v$, for a vector $v\in \mathbb{R}^{\Nw}$, $v\neq 0$. Then $v^T \Zmat^T \Gmat \Zmat v > 0$.
\end{proof}

\begin{theorem} \label{WPdiscr}
Problem~\eqref{KKTsystem} has a unique solution $h^\star=[(h_\mathcal{D}^\star)^T,(h_\Fcal^\star)^T]^T$, $w^\star=[(q^\star)^T,(u^\star)^T]^T$, $\lambda^\star$, such that $h_\mathcal{D}^\star,h_\Fcal^\star,q^\star,u^\star$ correspond to the constrained minimum of problem \eqref{Pdiscr1}-\eqref{Pdiscr3}.
\end{theorem}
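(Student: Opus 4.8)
The plan is to read problem~\eqref{Pdiscr1}-\eqref{Pdiscr3} as a single equality-constrained quadratic program and to recognise system~\eqref{KKTsystem} as its Karush--Kuhn--Tucker (KKT) conditions. Collecting the primal unknowns as $x=[h^T\ w^T]^T$, the objective in~\eqref{Pdiscr1} is the quadratic form $\Jd(x)=x^T{\bf{\mathcal{W}}}x$, where ${\bf{\mathcal{W}}}=\begin{bmatrix}\Gmat & {\bf{\mathcal{B}}^+}\\ {\bf{\mathcal{B}}^+}^T & {\bf{\mathcal{C}}}\end{bmatrix}$ is the leading block of ${\bf{\mathcal{M}}}$, while constraints~\eqref{Pdiscr2}-\eqref{Pdiscr3} read compactly ${\bf{\mathcal{L}}}x=b$ with ${\bf{\mathcal{L}}}=[{\bf{\mathcal{A}}}\ \ -{\bf{\mathcal{B}}}]$. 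Two structural facts drive the argument. First, ${\bf{\mathcal{W}}}$ is symmetric positive semi-definite: because $\Jd$ is by construction a sum of squared $\Lrm^2$-norms, one has $\Jd(x)=x^T{\bf{\mathcal{W}}}x\ge0$ for all $x$, so the program is convex. Second, expanding the block products shows that the first two block-rows of~\eqref{KKTsystem} are exactly the stationarity condition ${\bf{\mathcal{W}}}x+{\bf{\mathcal{L}}}^T\lambda=0$ for the program (the factor coming from $\nabla(x^T{\bf{\mathcal{W}}}x)=2{\bf{\mathcal{W}}}x$ being absorbed into $\lambda$), and the third block-row is primal feasibility ${\bf{\mathcal{L}}}x=b$. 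For a convex quadratic objective under affine constraints the KKT conditions are both necessary and sufficient for global optimality; it therefore suffices to prove that ${\bf{\mathcal{M}}}$ is nonsingular and then to identify the primal part of its unique solution with the constrained minimiser.

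For the nonsingularity I would argue on the homogeneous system. Assume ${\bf{\mathcal{M}}}[x^T\ \lambda^T]^T=0$, that is ${\bf{\mathcal{W}}}x+{\bf{\mathcal{L}}}^T\lambda=0$ and ${\bf{\mathcal{L}}}x=0$. Left-multiplying the first identity by $x^T$ and using ${\bf{\mathcal{L}}}x=0$ (so that $x^T{\bf{\mathcal{L}}}^T\lambda=({\bf{\mathcal{L}}}x)^T\lambda=0$) gives $x^T{\bf{\mathcal{W}}}x=0$, i.e. $\Jd(x)=0$ with $x\in\ker({\bf{\mathcal{L}}})$. Writing $x=\Zmat v$ in the null-space basis of Lemma~\ref{lemma1}, this reads $v^T\Zmat^T{\bf{\mathcal{W}}}\Zmat v=0$; since Lemma~\ref{lemma1} establishes that this reduced matrix, namely the restriction of the functional's quadratic form to $\ker({\bf{\mathcal{L}}})$, is positive definite, we conclude $v=0$, hence $x=0$. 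The stationarity equation then collapses to ${\bf{\mathcal{L}}}^T\lambda=0$; but ${\bf{\mathcal{A}}}$ is invertible (a consequence of $\alpha,\beta>0$, as already used in Lemma~\ref{lemma1}), so ${\bf{\mathcal{L}}}=[{\bf{\mathcal{A}}}\ \ -{\bf{\mathcal{B}}}]$ has full row rank, ${\bf{\mathcal{L}}}^T$ has trivial kernel, and $\lambda=0$. Thus $\ker({\bf{\mathcal{M}}})=\{0\}$ and ${\bf{\mathcal{M}}}$ is nonsingular, which yields existence and uniqueness of $(h^\star,w^\star,\lambda^\star)$ for the prescribed right-hand side.

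It then remains to certify that $(h_\D^\star,h_\Fcal^\star,q^\star,u^\star)$ is the constrained minimum and not merely a stationary point. I would make this explicit through the null-space (reduced) formulation: pick any particular feasible point $x_p$ (which exists because ${\bf{\mathcal{L}}}$ has full row rank) and parametrise the feasible affine set as $x=x_p+\Zmat v$; substituting into $\Jd$ produces the reduced objective $v^T(\Zmat^T{\bf{\mathcal{W}}}\Zmat)v$ plus a linear and a constant term, whose Hessian $2\,\Zmat^T{\bf{\mathcal{W}}}\Zmat$ is positive definite by Lemma~\ref{lemma1}. Hence the reduced problem is strictly convex and has a unique minimiser $v^\star$, giving the unique constrained minimiser $x^\star=x_p+\Zmat v^\star$. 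Because the KKT conditions are necessary at this minimiser and sufficient by convexity, the stationary point delivered by ${\bf{\mathcal{M}}}$ coincides with $x^\star$, and $\lambda^\star$ is its (unique, again by the full row rank of ${\bf{\mathcal{L}}}$) Lagrange multiplier, which proves the theorem.

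The delicate step is the nonsingularity argument, specifically the passage from semidefiniteness of ${\bf{\mathcal{W}}}$ on the whole space to invertibility of the saddle-point matrix: since ${\bf{\mathcal{W}}}$ is only positive \emph{semi}-definite globally, one cannot argue directly, and definiteness is available only after restriction to $\ker({\bf{\mathcal{L}}})$. The proof must therefore combine the two ingredients supplied separately, namely the positive definiteness of the reduced matrix $\Zmat^T{\bf{\mathcal{W}}}\Zmat$ from Lemma~\ref{lemma1} (which encodes that the continuity and flux-matching contributions cannot all vanish on a nontrivial element of the constraint kernel) and the full row rank of ${\bf{\mathcal{L}}}$ inherited from the invertibility of ${\bf{\mathcal{A}}}$, taking care that the former controls the primal variable $x$ while the latter controls the multiplier $\lambda$.
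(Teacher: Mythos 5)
Your proposal is correct and follows essentially the same route as the paper: the paper's proof is a one-line appeal to Lemma~\ref{lemma1} plus the classical quadratic-programming result (Theorem 16.2 of Nocedal--Wright), and what you have written is precisely the proof of that cited theorem --- nonsingularity of the saddle-point matrix from positive definiteness of the reduced Hessian on $\ker({\bf{\mathcal{L}}})$ together with the full row rank of ${\bf{\mathcal{L}}}$ inherited from the invertibility of ${\bf{\mathcal{A}}}$, followed by the null-space reduction identifying the stationary point with the unique constrained minimiser. Your reading of Lemma~\ref{lemma1} as asserting positive definiteness of the full objective Hessian ${\bf{\mathcal{W}}}$ restricted to $\ker({\bf{\mathcal{L}}})$, rather than literally of the $h$-block $\Gmat$ (whose dimensions would not match $\Zmat$), is the correct interpretation and is what the lemma's proof actually establishes.
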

The proof follows from Lemma~\ref{lemma1} applying a classical argument of quadratic programming (see Theorem 16.2 in \cite{NWsecEd}).

\section{Unconstrained optimization problem}
\label{UncProb}
\newcommand{\Acal}{{\bf{\mathcal{A}}}}
\newcommand{\Bcal}{{\bf{\mathcal{B}}}}
\newcommand{\Bpcal}{{\bf{\mathcal{B}}}^+}
\newcommand{\Ccal}{{\bf{\mathcal{C}}}}
\newcommand{\Gcal}{{\bf{\mathcal{G}}}}

We can proceed formally, replacing the constraint equations into the functional, to obtain an unconstrained minimization problem. We have $h=\Acal^{-1}(\Bcal w+b)$, from which we obtain:
\begin{eqnarray*}
\Jd^\star(w)&=&w^T\left(\Bcal^T\Acal^{-T}\Gmat\Acal^{-1}\Bcal + \Ccal - \Bcal^T\Acal^{-T}\Bpcal-\alpha {\Bpcal}^T\Acal^{-1}\Bcal\right)w \\
&+& 2\left(b^T\Acal^{-T}\Gmat\Acal^{-1}\Bcal - b^T\Acal^{-T}\Bcal\right)w + b^T\Acal^{-T}\Gmat\Acal^{-1}b\\
&:=& w^T \Gcal w + 2 g w + \text{const}
\end{eqnarray*}
The unconstrained minimization problem then reads
\begin{equation}
\min_w w^T \Gcal w + 2 g^T w \label{OPTunconstr}
\end{equation}
or equivalently $\Gcal w + g = 0$.
Matrix $\Gcal$ is symmetric positive definite, given the equivalence of \eqref{OPTunconstr} with \eqref{Pdiscr1}-\eqref{Pdiscr3}. The unconstrained minimization problem can thus be solved with a gradient based iterative method, such as the conjugate gradient method.
The steps of the method are as follows:
\begin{itemize}
\setlength\itemsep{-.2em}
\item[] guess $w_0$
\item[] compute $\gamma_0=(\Gcal w_0 + g)$ and set $d_0=-\gamma_0$
\item[] set $k=0$
\item[] while $\gamma_k \neq 0$
\begin{itemize}
\setlength\itemsep{-.1em}
\item[] compute step size $\zeta_k=\frac{\gamma_k^T \gamma_k}{d_k^T\Gcal d_k}$
\item[] set $w_{k+1}=w_k+\zeta_k d_k$
\item[] set $\gamma_{k+1}=\gamma_k+\zeta_k \Gcal d_k$
\item[] compute $\theta_{k+1}=\frac{\gamma_{k+1}^T \gamma_{k+1}}{\gamma_k^T \gamma_k}$
\item[] set $d_{k+1}=-\gamma_{k+1}+\theta_{k+1}d_k$
\item[] set $k=k+1$
\end{itemize}
\item[] end
\end{itemize}
The computation of quantity $y_k=\Gcal d_k$, at each step $k$ can be performed as follows: setting $\bar{h}_k=\Acal^{-1}(\Bcal d_k)$ and $\lambda_k=\Acal^{-T}(\Gmat \bar{h}_k - \Bpcal d_k)$, it is $y_k=\Bcal^T\lambda_k+\Ccal d_k -{\Bpcal}^T \bar{h}_k$.
If $\beta=0$, which is possible as long as there is a non empty portion of the Dirichlet boundary for the three dimensional domain, i.e. $|\Gamma_D|>0$, the computation of $\bar{h}_k$, $\lambda_k$ at each step can be performed independently and in parallel on each fracture and on the three dimensional domain, thus easily allowing to use parallel computing resources for efficient resolution of the scheme, thanks to the block diagonal structure of $\Acal$ and $\Amat_\Fcal$. If $\beta > 0$ then problems on the fractures can be decoupled from the problem in the bulk domain as follows: at step $k>0$, being $\bar{h}_k=[\bar{h}_{\D,k}^T \ \ \bar{h}_{\Fcal,k}^T]^T$ and splitting $d_k=[d_{q,k}^T \ \ d_{u,k}^T]^T$, we compute
\begin{eqnarray*}
\bar{h}_{\D,k}&=&\Amat_\D^{-1}\left(\Emat d_{q,k}\right)\\
\bar{h}_{\Fcal,k}&=&\Amat_\Fcal^{-1}\left(\Bmat d_{u,k}-\beta (\Gmat^{\D F})^T  \bar{h}_{\D,k-1}- \Dmat d_{q,k}\right),
\end{eqnarray*}
and similarly for $\lambda_k$.

\section{Numerical results}
\label{NumRes}
In this section we provide some numerical results in order to show the applicability of the present approach to flow simulations in porous media crossed by arbitrarily complex networks of fractures. All the simulations are performed using linear Lagrangian finite elements on $\TD$ for $h_\D$, linear Lagrangian finite  elements on $\TFi$ for $h_i$, piecewise constant basis functions on $\TGi$ for $q_i$ and piece-wise constant basis functions on $\TS$ for $u_i^m$ on each trace $S_m$, on each fracture $F_i$, $i=1,\ldots,N_F$, $m\in\mathcal{S}_i$.
\subsection{Problems with known solution}
We first propose two simple problems with known analytical solutions, labelled Problem 1 and Problem 2, having the same domain and type of boundary conditions. A cubic domain with unitary edge length is considered; the bottom face is on the plane $z=-\frac12$ with respect to a reference system $\mathcal{O}xyz$, and the cube is crossed by a single fracture $F_1$ placed on the plane $z=0$, see Figure~\ref{P1GeomMesh}, left. The problems are set as follows:
\begin{eqnarray*}
&&\aD\left(H_\D, v \right)-b_1\left(Q_1,v_{|\Gamma_i}\right)=\left(f,v\right)\\
&&a_1\left(H_1, w_1 \right)+\left( H_{\D|F_1},w_1\right)_{F_1}=-b_1\left(Q_1,w_1\right)\\
&&b_1\left(H_{\D|F_1}-H_1,\mu_1 \right)=0, \quad \forall \mu_1\in \Vrm_1'\\
\end{eqnarray*}
with $f=-1$ for Problem 1 and $f=0$ for Problem 2, $\beta=1$, $\K_\D=\K_1=1$ for both problems. Dirichlet boundary conditions are set on cube faces on planes $z=-\frac12$, $z=\frac12$, $x=0$, $x=1$, Neumann boundary conditions on cube faces on planes $y=0$ and $y=1$. Boundary conditions on fracture edges are prescribed accordingly to the boundary conditions on cube edges. Dirichlet and Neumann boundary conditions are derived from the analytical solution, which is $h=\frac14(x^2+y^2)+\frac12|z|$ for Problem 1 and $h=\frac12(x^2-y^2)+z$ for Problem 2. The two problems here considered also share the same meshes. In Figures~\ref{P1GeomMesh}, right and \ref{meshProblem1_1bis}, we display the colormap of the solution of Problem~1. The mesh for the three dimensional domain is non conforming with the fracture plane and independent from the mesh on the fracture, as shown in Figure~\ref{P1GeomMesh}, right. 
In Figure~\ref{P1conv} we report the behaviour of the error with respect to the mesh size both in $L^2$ and in $H^1$ norm for Problem 1.
The three dimensional mesh parameter ranges between $0.02$ and $4\times 10^{-5}$, the mesh on the fracture between $0.3$ and $5\times10^{-3}$. Due to the non conformity of the mesh and to the irregular behaviour of the solution across the interface, sub-optimal convergence trends are obtained. The obtained slopes for the error are compatible with the bounded regularity of the solution $h \not\in H^2(\D)$.
Optimal convergence curves are however recovered if the solution across the interface is smooth. In fact, if we consider Problem 2, having a smooth solution, optimal convergence trends are recovered, as reported in Figure~\ref{P1bisconv}.

\begin{figure}
\begin{minipage}{0.47\textwidth}
\includegraphics[width=0.99\textwidth]{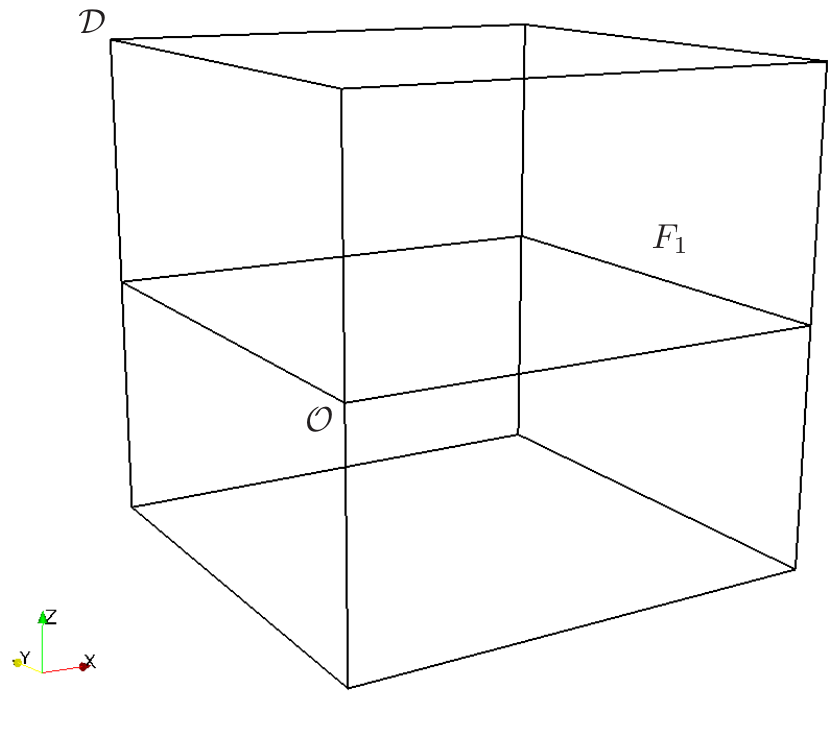}
\end{minipage}
\begin{minipage}{0.47\textwidth}
\includegraphics[width=0.99\textwidth]{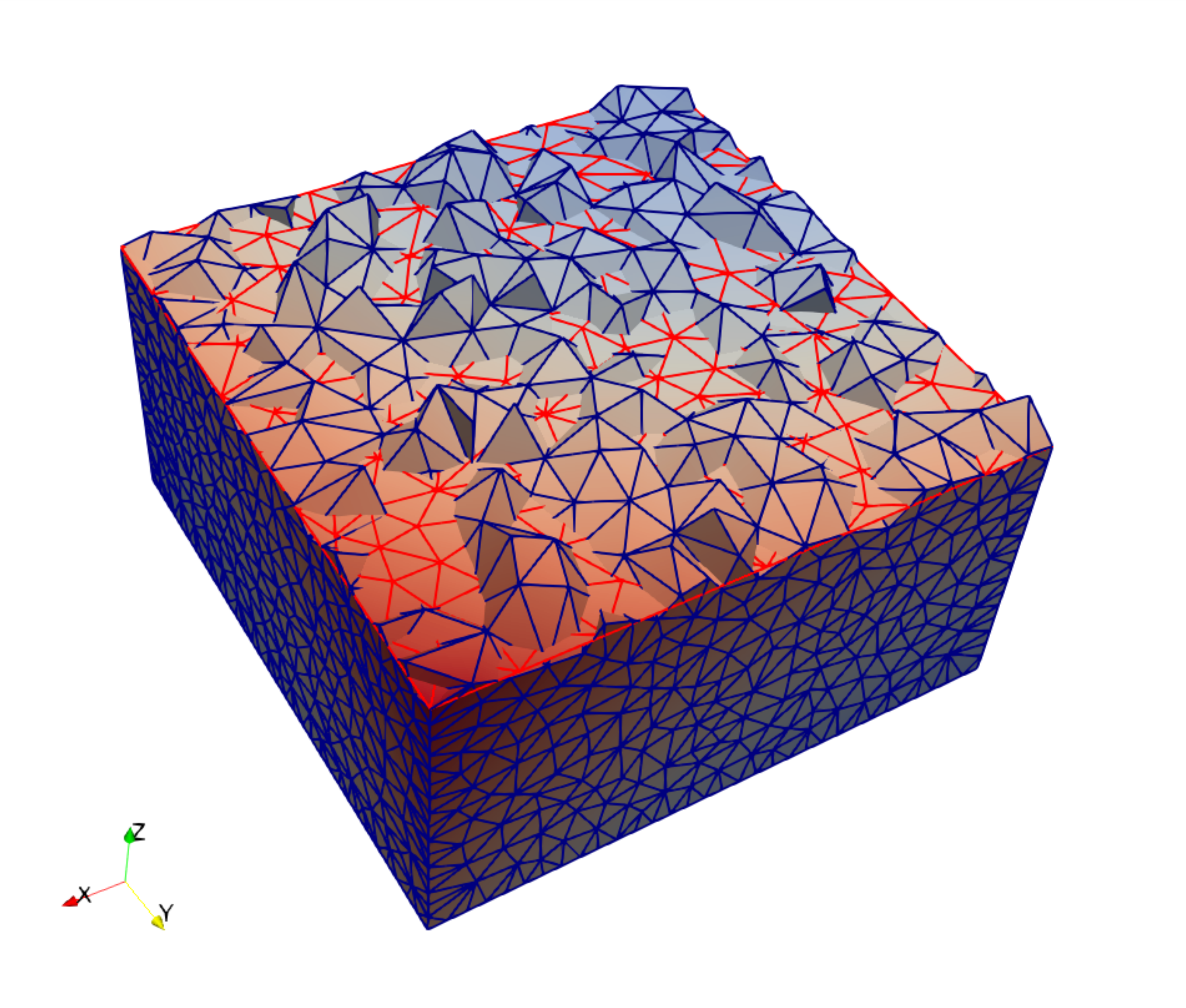}
\end{minipage}
\caption{Problem 1: domain description (left) and detail of the non conforming mesh at the interface (right)}
\label{P1GeomMesh}
\end{figure}
\begin{figure}
\begin{minipage}{0.47\textwidth}
\includegraphics[width=0.99\textwidth]{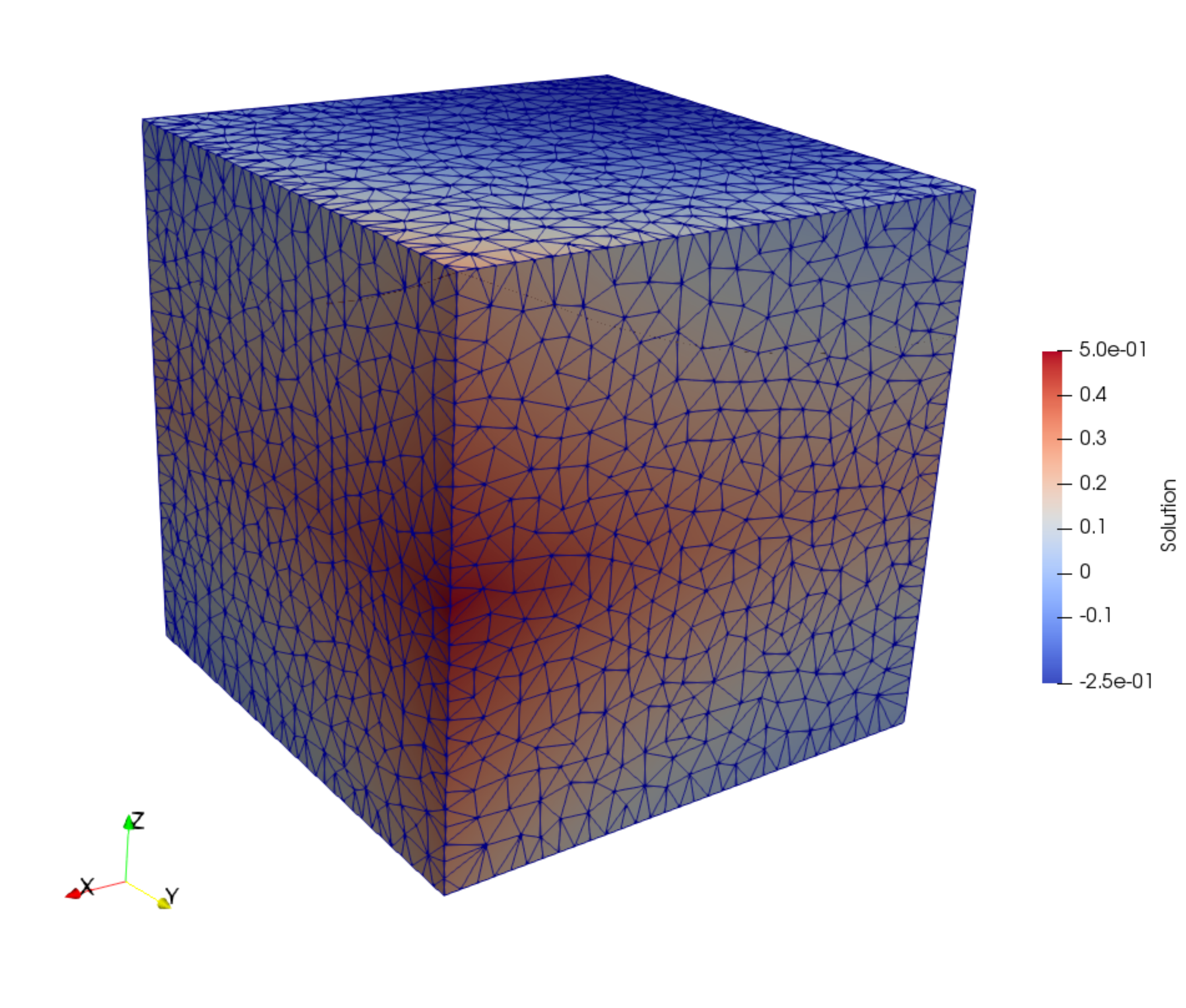}
\end{minipage}
\begin{minipage}{0.47\textwidth}
\includegraphics[width=0.99\textwidth]{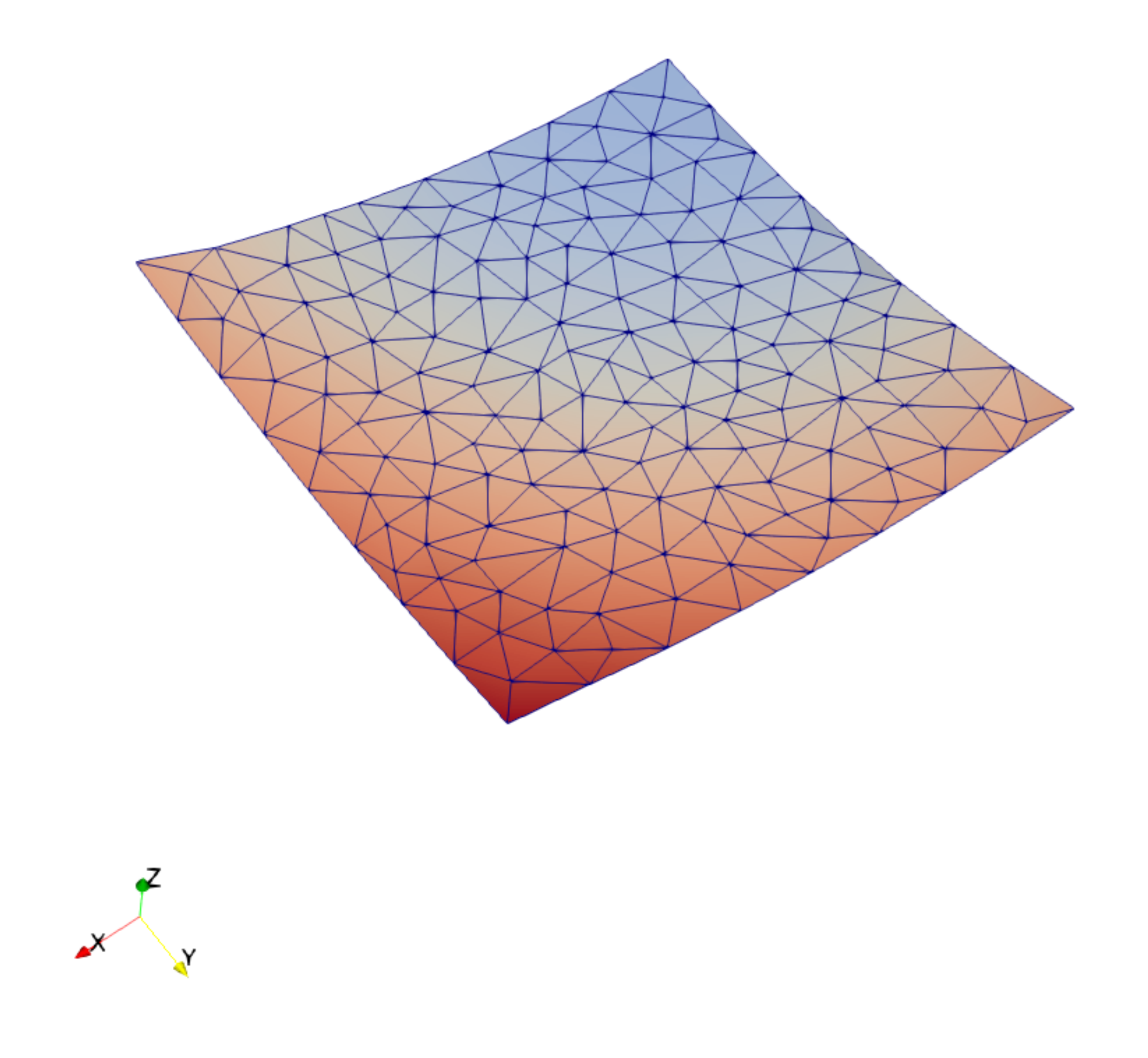}
\end{minipage}
\caption{Problem 1: solution on the three dimensional domain (left) and solution on the fracture (right)}
\label{meshProblem1_1bis}
\end{figure}

\begin{figure}
\begin{minipage}{0.48\textwidth}
\includegraphics[width=0.99\textwidth]{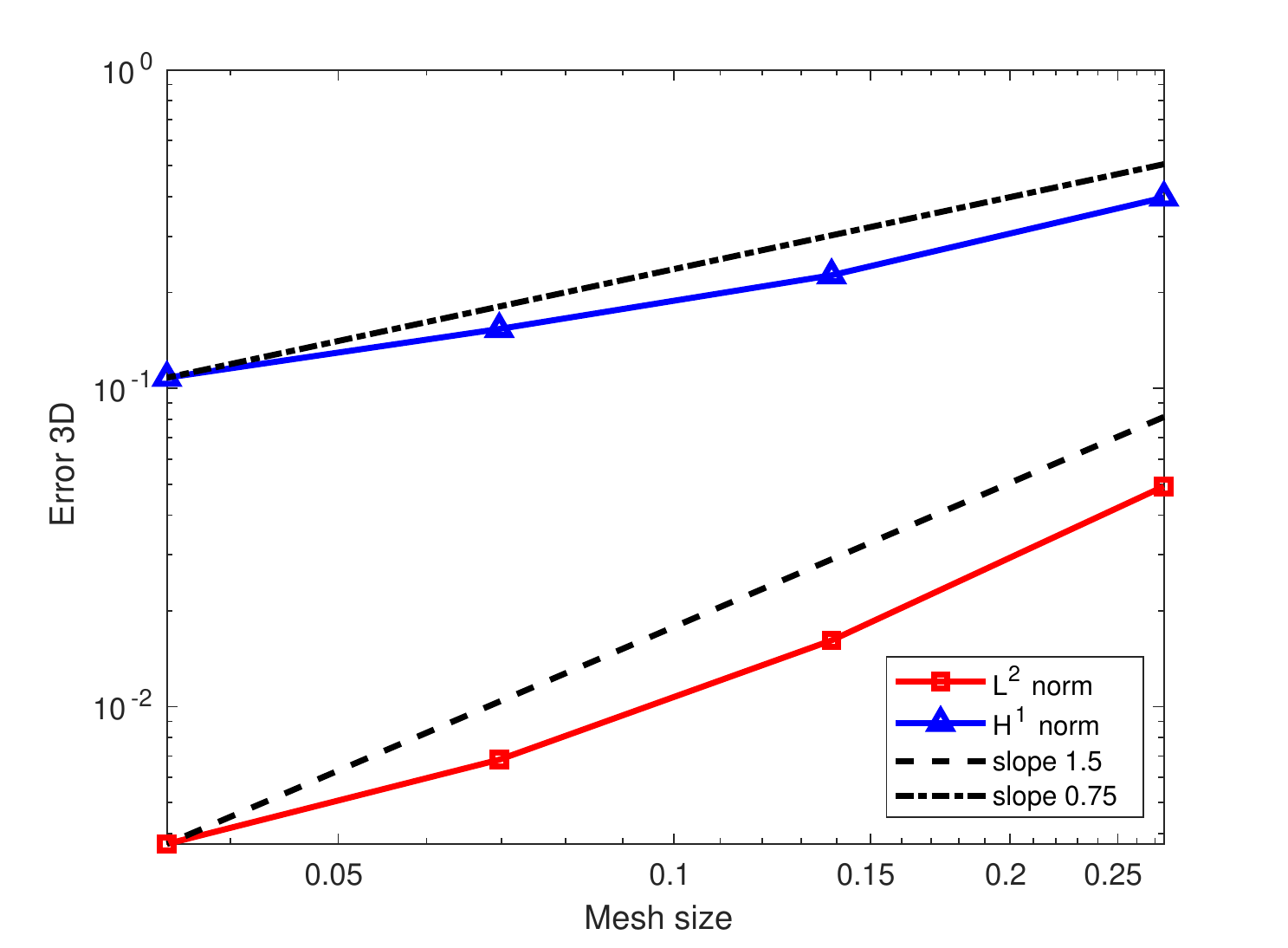}
\end{minipage}
\begin{minipage}{0.48\textwidth}
\includegraphics[width=0.99\textwidth]{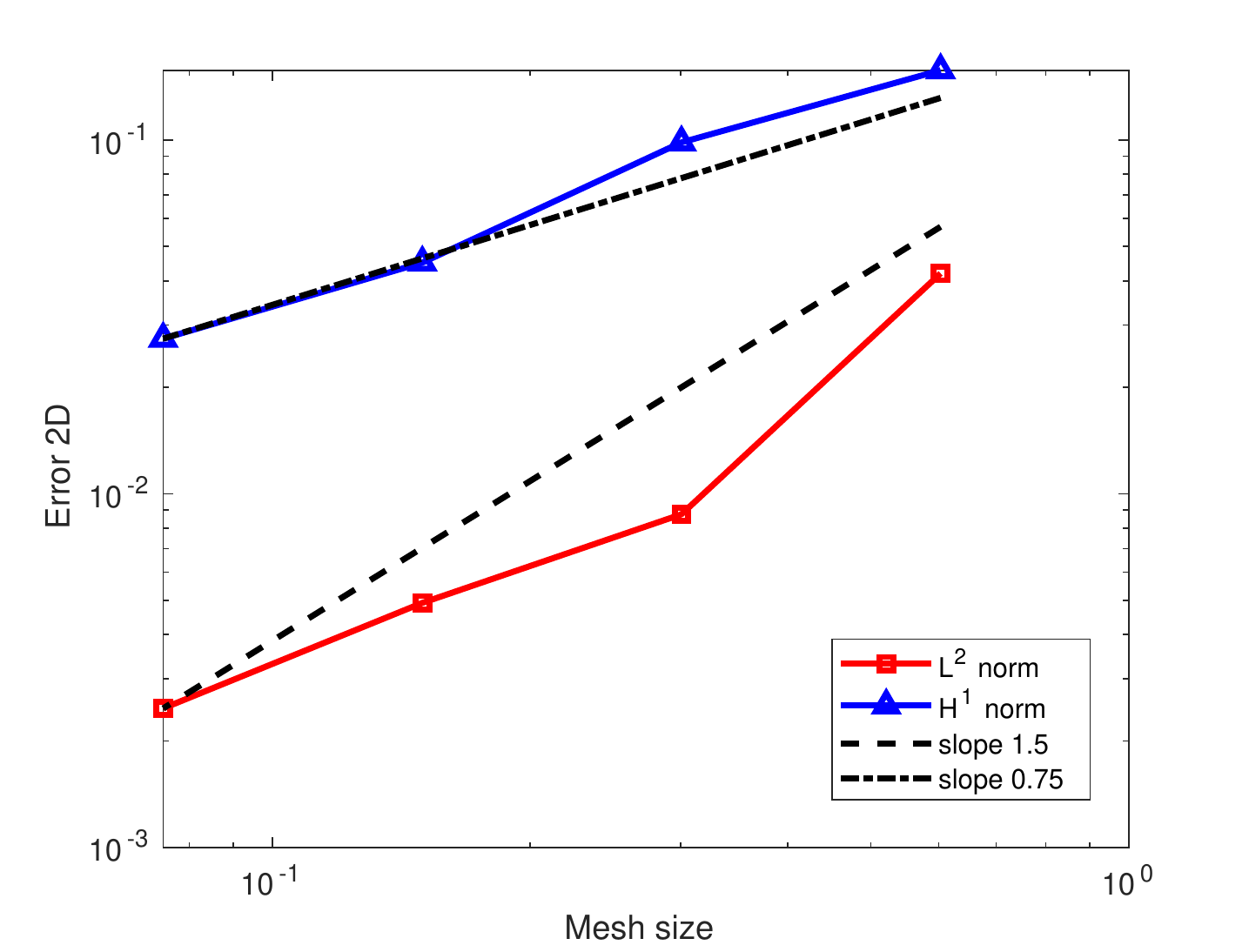}
\end{minipage}
\caption{Problem 1: convergence curves against mesh refinement}
\label{P1conv}
\end{figure}

\begin{figure}
\begin{minipage}{0.48\textwidth}
\includegraphics[width=0.99\textwidth]{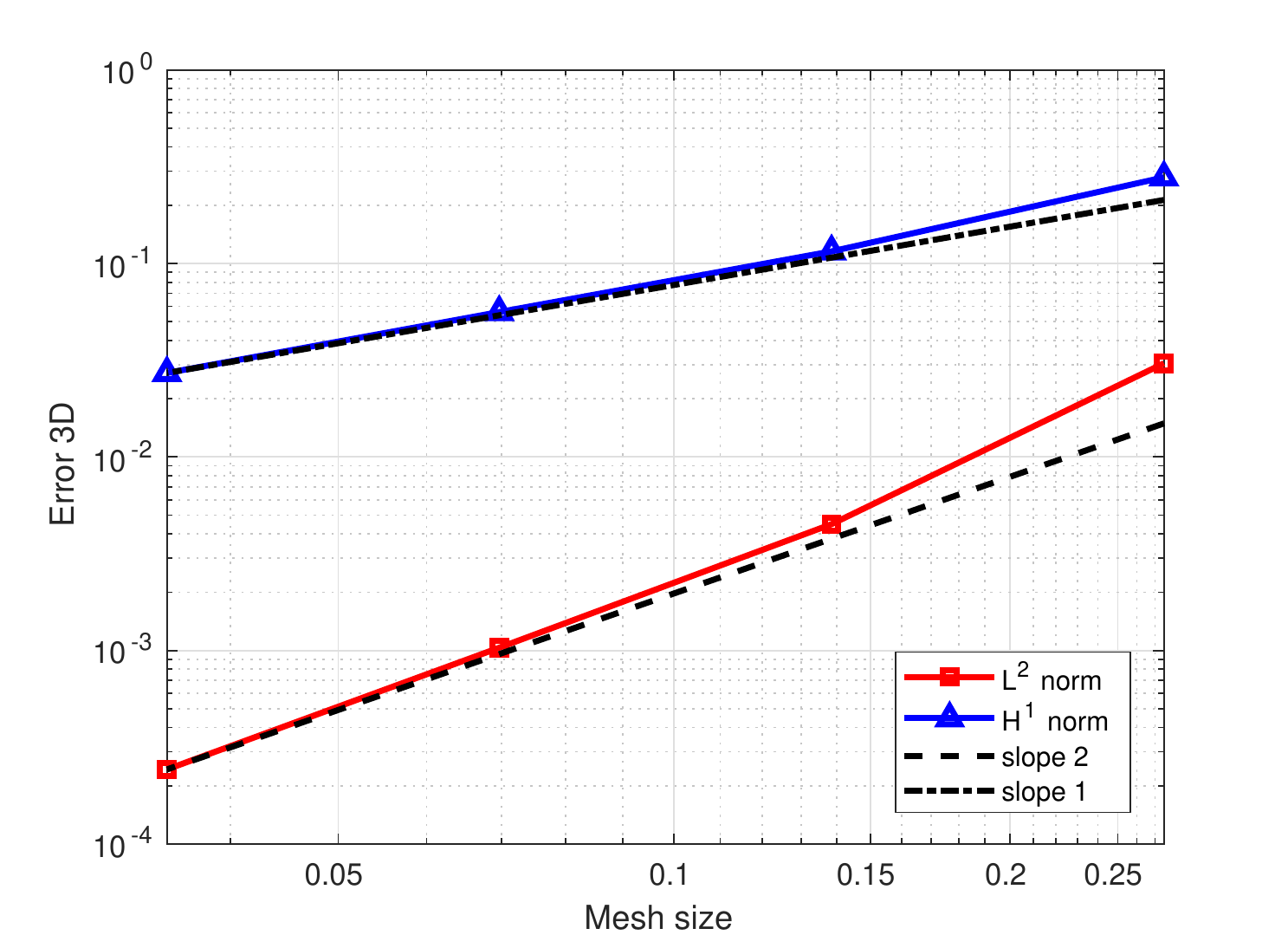}
\end{minipage}
\begin{minipage}{0.48\textwidth}
\includegraphics[width=0.99\textwidth]{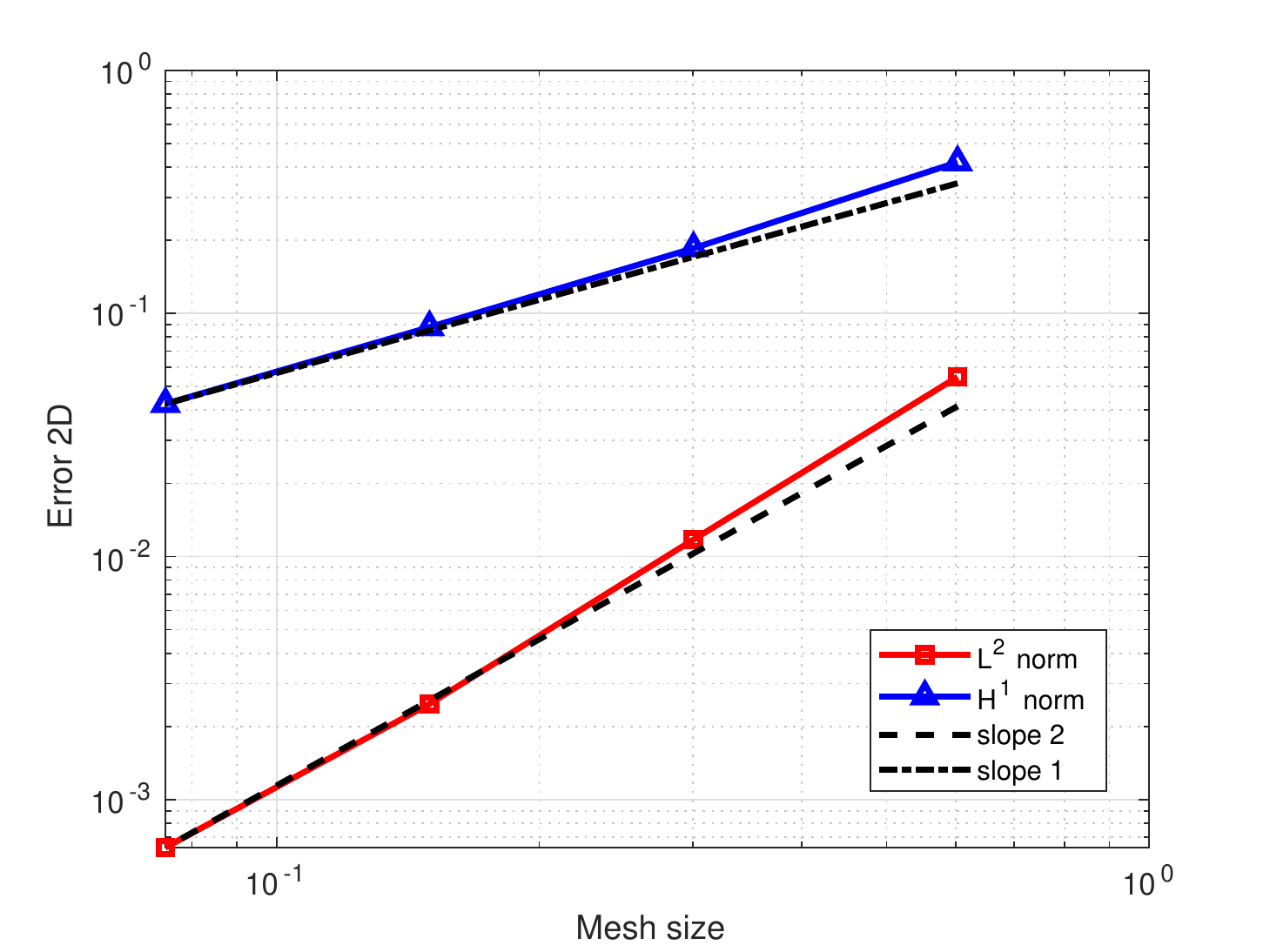}
\end{minipage}
\caption{Problem 2: convergence curves against mesh refinement for a problem with smooth solution across interface}
\label{P1bisconv}
\end{figure}

\subsection{DFN problem}
In the second example, a more complex and realistic DFN is considered, embedded in a cubic domain, with barycentre in the origin of a reference system $\mathcal{O}xyz$ and edge length equal to two, as shown in Figure~\ref{Prob3GeomSol}, on the left. The embedded DFN consists of $20$ randomly placed fractures, forming $62$ traces, with a number of traces per fracture ranging between $4$ and $10$. Traces intersect, forming angles as narrow as $11.5$ degrees, whereas the minimum angle between the normals of couples of intersecting fractures is $17.3$ degrees. A unitary pressure Dirichlet boundary condition is imposed on fracture edges lying on the planes $z=1$, and a zero pressure Dirichlet boundary condition is imposed on the cube face on the plane $z=0$,  all other fracture edges and cube faces being insulated. An inflow is thus obtained through some fracture edges, and outflow occurs through the bottom face of the cube. Figure~\ref{Prob3GeomSol}, on the right, shows the computed solution on the 3D domain and on the fractures, through a section of the three dimensional domain, along with the used mesh, characterized by a mesh parameter equal to $\delta_\D=0.01$ for the tetrahedral mesh and to $\delta_{F_i}=0.05$, $i=1,\ldots,20$ for the triangular mesh on all the fractures. On this mesh, the total number of unknowns, $\Nht+\NGt+\NSt$ is $5541$, and the minimization problem is solved using $90$ iterations to reach a relative residual of $10^{-8}$, resulting in a functional value of $0.069$, on the considered mesh. Considering a refined mesh, with mesh parameter $1.25\times10^{-4}$ for the tetrahedral mesh and to $1.25\times 10^{-2}$ for the triangular mesh, the total number of unknowns rises to $28686$ and the number of iterations to reach the same relative residual is $125$ and the functional value is reduced to $0.057$. We remark that the minimum of the discrete functional is greater than zero, as a consequence of the non conformity of the mesh. These results show the viability of the proposed approach in dealing with complex domains.

\begin{figure}
\begin{minipage}{0.48\textwidth}
\includegraphics[width=0.99\textwidth]{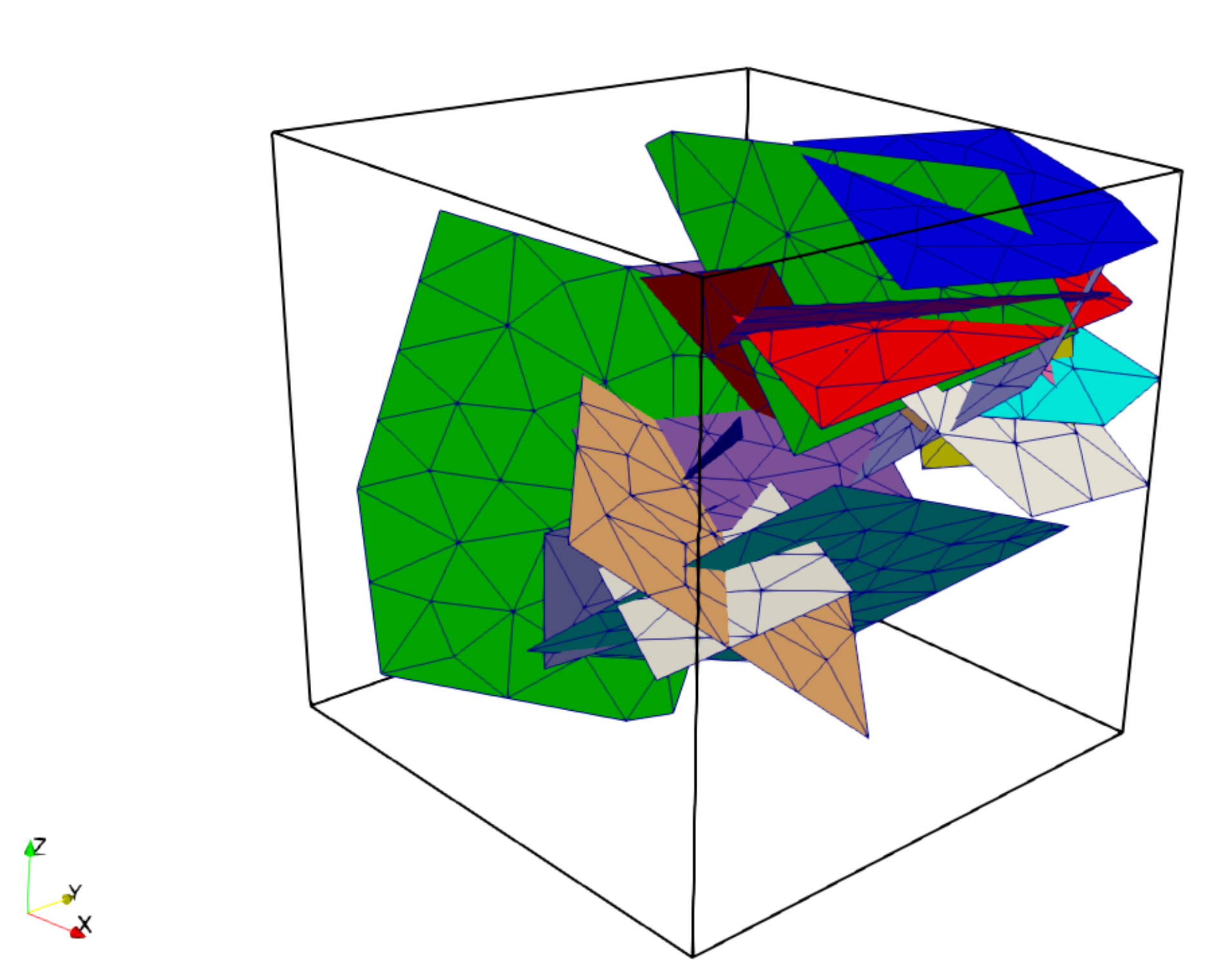}
\end{minipage}
\begin{minipage}{0.48\textwidth}
\includegraphics[width=0.99\textwidth]{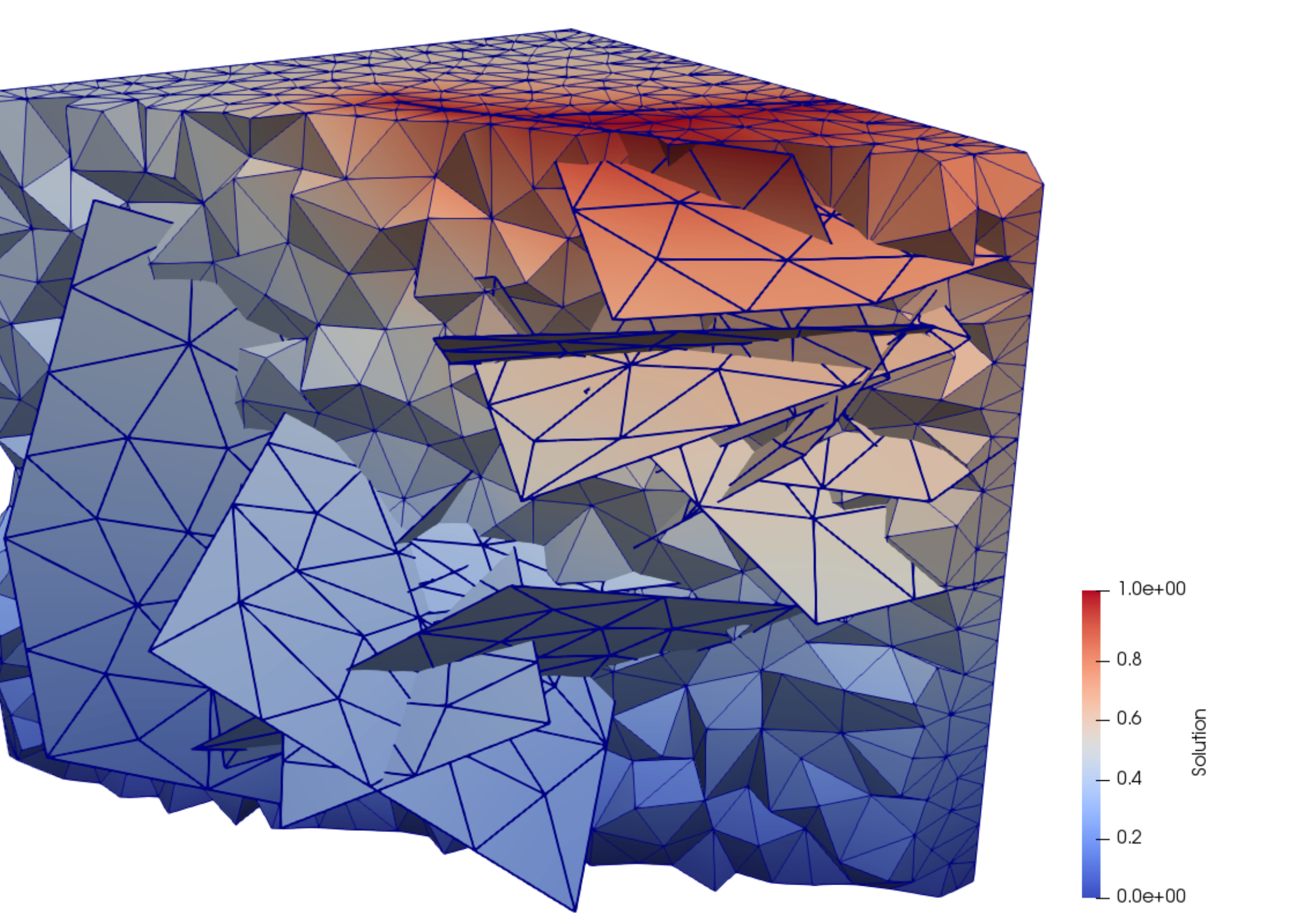}
\end{minipage}
\caption{DFN problem: domain (left) and a section of the computed solution (right)}
\label{Prob3GeomSol}
\end{figure}

\section{Conclusions}
\label{Conc}
A new discretization strategy for the simulation of the flow in arbitrarily complex DFM geometries has been presented and validated. The method is based on standard finite element discretizations for both the three dimensional domain and the fractures, and the meshing can be performed independently on each geometrical entity, thus actually overcoming any mesh related issue for DFM simulations. The resulting discrete problem is well posed and can be efficiently solved via a gradient scheme. The proposed numerical tests validate the method and show its applicability to realistic DFM configurations. Although the proposed method can be easily implemented for parallel solution, optimal parallel solver and suitable well balanced partitioning strategies, yielding to efficient parallel solvers, should be investigated but are out of the scope of the present work.

\begin{acknowledgements}
This work was supported by the MIUR project ``Dipartimenti di Eccellenza 2018-2022'' (CUP E11G18000350001), PRIN project "Virtual Element Methods: Analysis and Applications" (201744KLJL\_004), by INdAM-GNCS and by SmartData@polito.
\end{acknowledgements}

\bibliographystyle{spmpsci}
\bibliography{scico2mp,dfn,misc}

\end{document}